\algnewcommand\algorithmicinput{\textbf{Input:}}
\algnewcommand\Input{\item[\algorithmicinput]}
\algnewcommand\algorithmicoutput{\textbf{Output:}}
\algnewcommand\Output{\item[\algorithmicoutput]}
\algnewcommand{\LineIf}[2]{\State \algorithmicif\, #1 \,\algorithmicthen\, #2 \,\algorithmicend\ \algorithmicif}
\algnewcommand{\LineForAll}[2]{\State \algorithmicforall\, #1 \,\algorithmicdo\, #2 \,\algorithmicend\ \algorithmicfor}
\algnewcommand{\Accept}{\textbf{accept}}
\algnewcommand{\Reject}{\textbf{reject}}
\newcommand{\dom}{\mathrm{dom}}
\newcommand{\NL}{\ensuremath\mathsf{NL}}
\newcommand{\PSPACE}{\ensuremath\mathsf{PSPACE}}
\newcommand{\EXPTIME}{\ensuremath\mathsf{EXPTIME}}
\newcommand{\AC}{\ensuremath\mathsf{AC}}
\newcommand{\LOGSPACE}{\ensuremath\mathsf{L}}
\newcommand{\Poly}{\ensuremath\mathsf{P}}
\newcommand{\gR}{ \mathrel{\mathcal{R}}}
\newcommand{\gL}{ \mathrel{\mathcal{L}}}
\newcommand{\gH}{ \mathrel{\mathcal{H}}}
\newcommand{\gI}{ \mathrel{\mathcal{I}}}
\newcommand{\gJ}{ \mathrel{\mathcal{J}}}
\DeclareMathOperator{\Stab}{Stab}
\newtheorem{theorem}{Theorem}[section]
\newtheorem{lemma}[theorem]{Lemma}
\newtheorem{corollary}[theorem]{Corollary}
\newtheorem{proposition}[theorem]{Proposition}
\newtheorem{example}[theorem]{Example}
\theoremstyle{remark}
\numberwithin{equation}{section}
\begin{document}

\title{On the Complexity of Inverse Semigroup Conjugacy}
\date{\today}
\author{Trevor Jack}
\address{NOVAMATH -- Center for Mathematics and Applications \\
Faculdade de Ci\^{e}ncias e Tecnologia \\
Universidade Nova de Lisboa \\
2829-516 Caparica, Portugal}
\email{trevor.jack@colorado.edu}

\thanks{This work was partially supported by the Funda\c{c}\~{a}o para a Ci\^{e}ncia e a Tecnologia (Portuguese Foundation for Science and Technology) through the projects UIDB/00297/2020 (Centro de Matemática e Aplica\c{c}\~{o}es) and PTDC/MAT-PUR/31174/2017.}
\keywords{inverse semigroups, conjugacy, algorithms, computational complexity, $\NL$-completeness, $\AC^0$, idempotents.}
\subjclass[2010]{Primary: 20M18; Secondary 68Q25}

\begin{abstract}
We investigate the computational complexity of various decision problems related to conjugacy in finite inverse semigroups. We describe \linebreak polynomial-time algorithms for checking if two elements in such a semigroup are $\sim_p$ conjugate and whether an inverse monoid is factorizable. We describe a connection between checking $\sim_i$ conjugacy and checking membership in inverse semigroups. We prove that $\sim_o$ and $\sim_c$ are partition covering for any countable set and that $\sim_p$, $\sim_{p^*}$, and $\sim_{tr}$ are partition covering for any finite set. We prove that checking for nilpotency, $\gR$-triviality, and central idempotents in partial bijection semigroups are $\NL$-complete problems and we extend several complexity results for partial bijection semigroups to inverse semigroups.
\end{abstract}
\maketitle
\section{Introduction}
Conjugacy classes are important tools for analyzing group structure and significant work has been done to define an equivalence relation for semigroups that coincides with group conjugacy when the semigroup is a group. It turns out there are manys ways to define such an equivalence relation. In \cite{AK:FN}, the authors analyze four ways of defining conjugacy for semigroups and this paper will begin with a brief review of these types of conjugacy. This paper will then build upon results from \cite{AK:CI} related to conjugacy in inverse semigroups, with a particular focus on determining the computational complexity of decision problems involving conjugacy in finite inverse semigroups.

We will also recite several results from \cite{TJ:PB} related to decision problems for partial bijection semigroups and then extend those results to decision problems for inverse semigroups. Relatedly, this paper will give several hardness results left open in \cite{TJ:PB}. Finally, this paper will answer Open Problem 6.10 from \cite{AK:FN}.

\section{Preliminaries} \label{NotationSection}
An element $s \in S$ is \emph{idempotent} if $ss=s$ and we denote the set of idempotents in $S$ as $E(S)$. Every finite semigroup $S$ has an \emph{idempotent power} $\omega \in \mathbb{N}$ such that $s^{\omega} \in E(S)$ for every $s \in S$. If $S$ is a monoid, define $S^1 := S$. Otherwise, let $1$ be a new element satisfying $1s = s1 = s$ for each $s \in S$ and define $S^1 := S \cup \{1\}$. We call an element $u \in S^1$ a \emph{unit} iff there exists $u^{-1} \in S$ satisfying $u u^{-1} = u^{-1} u = 1$. The set of units of $S$ forms a subgroup, denoted $U(S)$. A monoid $S$ is called \emph{factorizable} if $S = U(S)E(S)$. We shall also refer to an element as factorizable if it is the product of a unit and an idempotent.

For a semigroup $S$, two elements $a,b \in S$ are said to be $\gR$-related  ($\gL$-related), denoted $a\gR b$ ($a \gL b$), iff $aS^1 = bS^1$ ($S^1a = S^1b$). The elements are $\gH$-related, denoted $a\gH b$ iff $a\gR b$ and $a\gL b$. For $\gI \in \{\gR,\gL,\gH\}$, a semigroup $S$ is \emph{$\gI$-trivial} iff for every $s,t \in S$, $s \gI t$ implies $s=t$.

A semigroup element $0 \in S$ is a \emph{left (right) zero} if $0a = 0$ ($a0 = 0$) for each $a \in S$. If a semigroup has a left zero and a right zero, then they are equal and we call this the zero element of the semigroup. A semigroup with a zero element is \emph{$n$-nilpotent} if the composition of any $n$ elements of the semigroup yields the zero element. A semigroup is \emph{nilpotent} if it is $n$-nilpotent for some $n \in \mathbb{N}$.

The \emph{full transformation semigroup} over $[n] := \{1,\dots,n\}$, denoted $T_n$, is the set of all mappings $f \colon [n] \to [n]$, together with function composition. Subsemigroups of the full transformation semigroup are often also referred to as \emph{transformation semigroups}. The \emph{full partial bijection semigroup} over $[n]$, denoted $I_n$, is the set of all bijections defined on subsets of $[n]$, together with function composition. Subsemigroups $S \leq I_n$ are called \emph{partial bijection semigroups}. Let $\dom(a)$ denote the domain of $a \in I_n$. Let $[n]a$ denote the range of $a$.

For $s \in S$, we define $t \in S$ to be an \emph{inverse} of $s$ iff $sts = s$ and $tst = t$. A semigroup is an \emph{inverse semigroup} if each of its elements has a unique invere. Every finite inverse semigroup can be embedded into some $I_n$ \cite[Th 5.1.7]{HO:FS}. Every element $s \in I_n$ can be characterized by the orbits of its action on $[n]$, which can be subdivided into two types.

A \emph{cycle of length k} is an ordered subset of $[n]$ denoted $(x_1,\dots,x_k)$ such that $x_is = x_{i+1}$ for $1 \leq i < k$ and $x_ks = x_1$. Let $\Delta_s^k$ be the set of all cycles of length $k$, $\Delta_s$ be the set of all cycles, and $\Delta_s^{>1} := \Delta \setminus \Delta_s^1$. A \emph{chain of length k} is an ordered set denoted $[x_1,\dots,x_k]$ such that $x_is = x_{i+1}$ for $1 \leq i < k$ and $x_k \not \in \dom(s)$. Let $\Theta_s^k$ be the set of all chains of length $k$, $\Theta_s$ be the set of all chains, and $\Theta_s^{>1} := \Theta \setminus \Theta_s^1$. Note that, by this convention, $\Theta_s^1 := \{\{x\}:x \not \in \dom(s)\}$

We now present several approaches for generalizing group conjugacy to semigroups. Each of the following equivalence relations coincide with group conjugacy when the semigroup is a group.

If $S$ is a monoid, we can define conjugacy in terms of units.
$$a \sim_u b \Leftrightarrow uau^{-1} = b \text{ and } u^{-1}bu = a \text{ for some }u \in U(S)$$

We can define conjugacy in more general contexts if we avoid referring to the unary inverse operator. Note that two group elements $a,b \in G$ are conjugate iff there exists $g \in G$ such that $ag = gb$. This suggests the following conjugacy relation over elements $a,b \in S$.
$$a \sim_o b \Leftrightarrow ag = gb \text{ and } bh = ha \text{ for some }g,h \in S$$

This is an equivalence relation, but it suffers the unfortunate drawback of being the universal equivalence relation for any semigroup that has a \emph{zero element}, $0$, satisfying $0s = s0 = 0$ for every $s \in S$.

To avoid this drawback, the authors in \cite{AK:CS} restricted the set from which $g$ and $h$ can be chosen. For a semigroup $S$ with zero element $0$ and any nonzero $a \in S \setminus \{0\}$, define $\mathbb{P}(a) := \{g \in S: (ma)g \neq 0 \text{ for any } ma \in S^1a \setminus \{0\}\}$. If $S$ has no zero element, define $\mathbb{P}(a) := S$. Define $\mathbb{P}^1(a) := \mathbb{P}(a) \cup \{1\}$ with $1$ being the identity element of $S^1$. We now define $\sim_c$ in a similar fashion as $\sim_o$:
$$a \sim_c b \Leftrightarrow ag = gb \text{ and } bh = ha \text{ for some }g \in \mathbb{P}^1(a) \text{ and some } h \in \mathbb{P}^1(b)$$

The next notion is motivated by the fact that $a,b \in G$ are conjugate iff there exist $g,h \in G$ such that $a=gh$ and $b=hg$:
$$a \sim_p b \Leftrightarrow a = uv \text{ and } b=vu \text{ for some } u,v \in S$$

In general, this relation is not transitive, so we denote the transitive closure of this relation as $\sim_{p^*}$. For finite monoids, we define the following notion of conjugacy, which is motivated by the representation theory of semigroups \cite{IR:RT}. We say $a \sim_{tr} b$ iff there are mutually inverse $g,h \in S$ such that: $hg = a^\omega$, $gh = b^\omega$, and $ga^{\omega+1}h = b^{\omega+1}$.

For inverse semigroups, we define a more general version of $\sim_u$.
$$a \sim_i b \Leftrightarrow sas^{-1} = b \text{ and } s^{-1}bs = a \text{ for some }s \in S$$

For finite inverse semigroups, it is known that $\sim_c \, = \, \sim_i$ \cite[Cor 2.15]{AK:FN} \cite[Prop 2.11]{AK:CI} and that the remaining conjugacy relations are ordered by inclusion as follows:
$$\sim_u \, \subseteq \, \sim_i \, \subseteq \, \sim_p \, \subseteq \, \sim_{p^*} \, \subseteq \, \sim_{tr} \, \subseteq \, \sim_o \, \text{\cite[Fig 1.1]{AK:FN}},$$

This paper will be investigating the computational complexity of decision problems in which we are given a set of partial bijections $\{a_1,\dots,a_k\}$ and asked to check some property of the generated inverse semigroup. Note that each partial bijection has a unique inverse and, for any composition of partial bijections $s_1 \cdots s_k$, its unique inverse is $s_k^{-1} \cdots s_1^{-1}$. So, the inverse semigroup generated by $a_1,\dots,a_k$ is the partial bijection semigroup generated by $a_1,\dots,a_k,a_1^{-1},\dots,a_k^{-1}$, which we denote as $\langle a_1,\dots,a_k,a_1^{-1},\dots,a_k^{-1}\rangle$.

Our analysis will reference complexity classes from the following hierarchy:

$$ \AC^0 \subseteq \LOGSPACE \subseteq \NL \subseteq \Poly \subseteq \PSPACE \subseteq \EXPTIME.$$

$\AC^0$ is the class of sets decidable by unbounded fan-in Boolean circuits of constant depth. Equivalently, $\AC^0$ is the class of first-order definable properties \cite{IM:DC}. Hence, it includes all decision problems in which we are given generators $a_1,\dots,a_k \in I_n$ and asked to check a property that can be characterized by a first-order formula quantified over the points $[n]$ and the generators $\{a_1,\dots,a_k\}$. Note that we are not allowed to quantify over all generated elements of the semigroup. $\LOGSPACE$ ($\NL$) is the class of sets decidable by a deterministic (nondeterministic) Turing machine using at most logarithmic space. $\Poly$ ($\PSPACE$) consist of sets that are decidable by a deterministic Turing machine in polynomial time (space). We refer the reader to \cite{PA:CC} for further background on computational complexity and to \cite{CP:AT} and \cite{HO:FS} for further background on semigroup theory.

\section{First-Order Definable Properties for Inverse Semigroups}
Since every finite inverse semigroup is isomorphic to a finite partial bijection semigroup, problems for the former cannot be harder than they are for the latter. The following problems are discussed in \cite{TJ:PB}, where relevant definitions for these results can be found. In particular, the complexity class $\AC^0$ corresponds to properties that can be characterized by first-order formulas quantified over points $[n]$ and generators $a_1,\dots,a_k \in I_n$. Combining results from \cite{FJ:CP} and \cite{TJ:PB}, we obtain the following.

\begin{corollary} \label{cor:naive}
Checking if a finite inverse semigroup given by generators in $I_n$:
\begin{enumerate}
\item is commutative is in $\AC^0$ by \cite[Thm 3.2]{FJ:CP},
\item is a semilattice is in $\AC^0$ by  \cite[Thm 3.3]{FJ:CP},
\item is a group is in $\AC^0$ by  \cite[Thm 3.5]{FJ:CP},
\item has a left identity is in $\AC^0$ by \cite[Thm 3.2]{TJ:PB},
\item has a right identity is in $\AC^0$ by \cite[Thm 3.3]{TJ:PB},
\item is completely regular is in $\AC^0$ by \cite[Thm 3.5]{TJ:PB}, and
\item is Clifford is in $\AC^0$ by \cite[Cor 3.6]{TJ:PB}.
\end{enumerate}
\end{corollary}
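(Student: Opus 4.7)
The plan is to reduce each of the seven inverse semigroup decision problems to the corresponding partial bijection semigroup problem already placed in $\AC^0$ by the cited references. As recorded in Section~\ref{NotationSection}, the inverse semigroup generated by $a_1,\dots,a_k \in I_n$ equals the partial bijection semigroup $\langle a_1,\dots,a_k,a_1^{-1},\dots,a_k^{-1}\rangle$. Thus each intrinsic property on the list can be tested on the inverse semigroup by running the corresponding partial bijection algorithm on the extended generating list of length $2k$.

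The first step will be to verify that forming this extended list is itself an $\AC^0$ transformation of the input. For each $i \leq k$ and each $(x,y) \in [n]^2$, we have $(x,y) \in a_i^{-1}$ iff $(y,x) \in a_i$, which is a single atomic condition on the input, so the inverses can be produced by the bottom layer of a first-order circuit. After this preprocessing, I would feed the enlarged list into the first-order formulas supplied by the cited theorems from \cite{FJ:CP} and \cite{TJ:PB}. Composing the two $\AC^0$ circuits yields an $\AC^0$ decision procedure for each item.

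The only point demanding care is to check that every property on the list is invariant under the choice of generating set, so that enlarging by inverses leaves the answer unchanged. Each of commutativity, being a semilattice, being a group, possessing a left or right identity, complete regularity, and being Clifford is defined purely in terms of the semigroup itself and not of a presentation, so this invariance is automatic. I expect this bookkeeping to be the only substantive point; once it is in hand, the seven items follow uniformly by the reduction above.
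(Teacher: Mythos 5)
Your proposal is correct and matches the paper's justification, which gives no separate proof beyond the observation that the inverse semigroup generated by $a_1,\dots,a_k$ is the partial bijection semigroup generated by $a_1,\dots,a_k,a_1^{-1},\dots,a_k^{-1}$ together with the cited $\AC^0$ results. Your additional remarks that the inverses are first-order definable from the input and that the properties are generating-set independent are exactly the (implicit) bookkeeping the paper relies on.
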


We now consider four problems that are known to be $\NL$-complete for transformation semigroups, but are much simpler for inverse semigroups. The input for each of the problems are generators for a semigroup. The respective outputs for the four problems are whether the generated semigroup: (1) is nilpotent, (2) is $\gR$-trivial, (3) has central idempotents, and (4) has a zero element.

That the first three problems are $\NL$-complete for transformation semigroups is essentially a consequence of \cite[Lem 4.9]{FJ:CP} and \cite[Lem 4.12]{FJ:CP}. We now adapt the proofs of those lemmas to prove that the first three problems remain $\NL$-complete even for partial bijection semigroups. We shall be reducing from the $\NL$-complete problem of checking if a directed graph is acyclic.

\begin{lemma}\label{lem:acyclicRed}
There exists a logspace transducer which, given a directed graph $G = (V, E)$, produces a partial bijection semigroup $S \leq I_V$ with the following properties:
\begin{enumerate}
  \item If the graph is acyclic, then every square in $S$ is the zero element.
  \item If the graph is not acyclic, then the semigroup is not $\gR$-trivial and there exists an idempotent that is not central.
\end{enumerate}
\end{lemma}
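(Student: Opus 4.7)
My approach is a natural path-semigroup reduction. For each edge $(u,v) \in E$, let $a_{u,v} \in I_V$ denote the partial bijection with $\dom(a_{u,v}) = \{u\}$ and $u\, a_{u,v} = v$, and let $S \leq I_V$ be generated by $\{a_{u,v} : (u,v) \in E\}$. Outputting these generators given $G$ is straightforwardly in logspace, since each generator is determined by a single edge. The basic observation is that a product $a_{u_1,v_1}\cdots a_{u_k,v_k}$ is nonzero precisely when $v_i = u_{i+1}$ for all $i<k$, in which case it equals the partial bijection $u_1 \mapsto v_k$ with domain $\{u_1\}$; so nonzero elements of $S$ correspond to directed walks in $G$.

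For part (1), I take any nonzero $a \in S$, realize it as a walk from some $u$ to some $v$, and observe that $a^2 \neq 0$ would force $v \in \dom(a) = \{u\}$, i.e.\ a closed walk in $G$. Acyclicity forbids this, so $a^2 = 0$.

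For part (2), given a cycle $u_1 \to u_2 \to \cdots \to u_k \to u_1$ in $G$, I form the idempotent $e := a_{u_1,u_2} a_{u_2,u_3} \cdots a_{u_k,u_1}$, which satisfies $\dom(e) = \{u_1\}$ and $u_1\, e = u_1$. Noncentrality is immediate from comparing $e \cdot a_{u_1,u_2} = a_{u_1,u_2}$ with $a_{u_1,u_2} \cdot e = 0$ (the latter since $u_2 \notin \dom(e)$). For the failure of $\gR$-triviality, I exhibit the mutual factorizations $a_{u_1,u_2} = e \cdot a_{u_1,u_2}$ and $e = a_{u_1,u_2} \cdot (a_{u_2,u_3}\cdots a_{u_k,u_1})$, which give $e S^1 = a_{u_1,u_2} S^1$, while $e \neq a_{u_1,u_2}$ since they disagree at $u_1$.

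The main delicate point is this last step: $S$ need not be inverse, so $\gR$-equivalence cannot be shortcut to equality of domains, and one must genuinely produce factors witnessing both containments. The return arc $u_2 \to u_3 \to \cdots \to u_1$ of the cycle supplies precisely the witness needed for the nontrivial direction $e \in a_{u_1,u_2} S^1$; this is why the reduction needs a full directed cycle, not merely a loop-like structure, and why acyclicity suffices to kill every square.
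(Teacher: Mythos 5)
Your construction and both verifications coincide with the paper's proof: the same single-edge generators $a_{(u,v)}$, the same walk argument showing every square is zero in the acyclic case, and the same cycle-idempotent witnessing non-centrality and the failure of $\gR$-triviality (your pair $(e, a_{u_1,u_2})$ is exactly the paper's pair $(s, sa_{(v_0,v_1)})$, and you usefully make explicit the return-arc factorization $e = a_{u_1,u_2}(a_{u_2,u_3}\cdots a_{u_k,u_1})$ that the paper leaves implicit in asserting $s \gR sa_{(v_0,v_1)}$). The one caveat, which the paper also has to invoke, is that $e \neq a_{u_1,u_2}$ requires the chosen cycle not to be a self-loop, so the reduction should be stated for graphs without self-loops.
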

\begin{proof}
For each edge $(u,v) \in E$, define a partial bijection $a_{(u,v)}$ such that $\dom(a_{(u,v)}) = \{u\}$ and $u a_{(u,v)} = v$. If $G$ is acyclic, then for any $s = a_{(u_1,v_1)} \cdots a_{(u_m,v_m)}$, either $u_1 \neq v_m$ or $v_\ell \neq u_{\ell+1}$ for some $\ell \in [m-1]$. In either case, $s^2$ has empty domain. If $G$ contains a cycle $v_0, \dots, v_\ell$ with $(v_{i-1}, v_i) \in E$ for all $i \in [\ell]$ and with $v_0 = v_\ell$, then the element $s = a_{(v_0, v_1)} \cdots a_{(v_{\ell-1}, v_\ell)}$ is the identity on $v_0$ and is undefined on all other points. Therefore, $s$ is idempotent. Note that $a_{(v_0, v_1)} s$ is the zero element but $s a_{(v_0, v_1)}$ is not, which means that $s$ is not central. Moreover, $s \gR sa_{(v_0,v_1)}$ and, since the graph has no self-loops, $s \neq sa_{(v_0,v_1)}$. Thus $S$ is not $\gR$-trivial.
\end{proof}

\begin{theorem}\label{thm:nilRcomIdem}
Given generators for a partial bijection semigroup $S$, each of the following problems is $\NL$-complete:
\begin{enumerate}
\item checking if $S$ is nilpotent;
\item checking if $S$ is $\gR$-trivial; and
\item checking if every idempotent in $S$ is central.
\end{enumerate}
\end{theorem}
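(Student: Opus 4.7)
The plan is to establish $\NL$-completeness in two parts. For hardness, I will use the construction of Lemma~\ref{lem:acyclicRed} as a logspace reduction from the $\NL$-complete problem of directed graph acyclicity (which is $\NL$-complete via $\NL = \co\NL$). For the matching $\NL$ upper bound, I will pass to the corresponding transformation semigroup problem, which is already in $\NL$ by \cite[Lem 4.9]{FJ:CP} and \cite[Lem 4.12]{FJ:CP}.

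For hardness, let $G$ be the input graph and $S$ the output of Lemma~\ref{lem:acyclicRed}. If $G$ is acyclic, then by Lemma~\ref{lem:acyclicRed}(1) every $s \in S$ satisfies $s^2 = 0$; since any idempotent $e$ satisfies $e = e^2 = 0$, we obtain $E(S) = \{0\}$. This immediately yields nilpotency (finite semigroups with only the zero idempotent are nilpotent) and makes the only idempotent, $0$, trivially central. For $\gR$-triviality, I will argue directly: if $s \gR t$ with both non-zero, then $s = tu$ and $t = sv$ for some $u, v \in S^1$; if $u = 1$ or $v = 1$ then $s = t$, while otherwise $s = s(vu)$ with $u, v \in S$ both non-zero, and so $vu$ corresponds to a closed walk of length at least two starting at the endpoint of $s$'s walk in $G$, contradicting acyclicity. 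If $G$ is not acyclic, Lemma~\ref{lem:acyclicRed}(2) directly supplies both a non-$\gR$-trivial witness and a non-central idempotent; the latter must be non-zero since $0$ is always central, so $S$ is not nilpotent either.

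For the upper bound, I will use the logspace-computable semigroup monomorphism $\phi : I_n \hookrightarrow T_{n+1}$ that sends each partial bijection to its extension by a new absorbing state $*$, with $\phi(a)(*) = *$. Under $\phi$, $S$ is isomorphic to $\langle \phi(a_1), \ldots, \phi(a_k) \rangle \leq T_{n+1}$, and since nilpotency, $\gR$-triviality, and centrality of idempotents are all isomorphism invariants, each problem reduces in logspace to the corresponding transformation semigroup problem, which is in $\NL$.

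The main technical obstacle is the $\gR$-triviality argument in the acyclic case: Lemma~\ref{lem:acyclicRed}(1) only guarantees that squares vanish, so deducing $\gR$-triviality of all of $S$ requires unpacking both $\gR$-witnesses and invoking acyclicity of $G$ to rule out the case where both witnesses sit in $S$ rather than collapsing to the adjoined identity.
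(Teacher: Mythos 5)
Your proof is correct and follows essentially the same route as the paper: $\NL$-hardness via the Lemma~\ref{lem:acyclicRed} reduction from directed graph acyclicity, and the $\NL$ upper bound by passing to the corresponding transformation semigroup problems from \cite{FJ:CP}. The only cosmetic difference is that, for $\gR$-triviality in the acyclic case, the paper simply invokes the fact that every (finite) nilpotent semigroup is $\gR$-trivial, whereas you unpack the $\gR$-witnesses and appeal to acyclicity directly; both arguments are fine.
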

\begin{proof}
Each of these problems are in $\NL$ since they are in $\NL$ for transformation semigroups \cite{FJ:CP}. For $\NL$-hardness, we argue that Lemma~\ref{lem:acyclicRed} reduces the problem of directed graph acyclicity to checking any of the three aforementioned properties. If the graph is acyclic, then by Lemma~\ref{lem:acyclicRed}, $s^2 = 0$ for every $s \in S$ and $E(S) = \{0\}$. Since $S$ is finite, $S$ is nilpotent. Since every nilpotent semigroup is $\gR$-trivial, then $S$ is $\gR$-trivial. Since the zero element is central, all of the idempotents in $S$ are central. Conversely, if the graph is not acyclic, then by Lemma~\ref{lem:acyclicRed}, $S$ is not $\gR$-trivial and $S$ has some non-central idempotent $e$. Since the zero element is central, $e^d = e \neq 0$ for every $d \in \mathbb{N}$ and $S$ is not nilpotent.
\end{proof}

Although the first three problems remain $\NL$-complete for partial bijection semigroups, checking zero membership can be solved in deterministic logspace. For a set of partial bijections $A = \{a_1,\dots,a_k\} \subseteq I_n$, define the {\emph transformation graph} $\Gamma(A,[n+1])$ to have vertices $[n+1]$ and undirected edges as follows: (1) $(p,q) \in [n]^2$ if either $pa_i = q$ or $qa_i = p$ for some $i \in [k]$ and (2) $(p,n+1)$ if $p \in [n] \setminus \dom(a_i)$ for some $i \in [k]$.

\begin{lemma} \label{lem:zero}
Let $A := \{a_1,\dots,a_k\} \subseteq I_n$ and let $S$ be the partial bijection semigroup generated by the elements of $A$. Then $S$ has a zero element iff the only connected component of $\Gamma(A,[n+1])$ containing more than one vertex is the component containing $n+1$.
\end{lemma}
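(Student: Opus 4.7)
The plan is to identify the candidate zero as $\id_F$, where $F \subseteq [n]$ is the union of the singleton components of $\Gamma(A,[n+1])$. Unpacking the definition of $\Gamma$, one sees that $F$ is exactly the set of points $p$ with $p \in \dom(a_i)$ and $pa_i = p$ for every generator $a_i$. First I would observe, by induction on word length, that every $s \in S$ satisfies $F \subseteq \dom(s)$ and $s|_F = \id_F$, and (by injectivity of partial bijections) maps $D \cap \dom(s)$ into $D := [n] \setminus F$. For the forward direction, suppose $S$ has a zero $z$ and assume for contradiction that some non-singleton component $C$ avoids $n+1$. Since no edge joins $C$ to $n+1$, every $a_i$ is defined on all of $C$, and since each action edge from $p \in C$ lands in $C$, $a_i|_C$ is an injection $C \to C$, hence a bijection by finiteness. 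Then every element of $S$ restricts to a bijection of $C$, in particular $z|_C$. The absorbing identity $z \cdot a_i = z$ yields $z|_C \circ a_i|_C = z|_C$, and cancelling the bijection $z|_C$ forces $a_i|_C = \id_C$ for every $i$; this kills all action edges inside $C$, contradicting $|C| \geq 2$.

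For the converse, assume every non-singleton component contains $n+1$. The main step is the \emph{key lemma}: for every $q \in D$ there exists $t \in S$ with $q \notin \dom(t)$. I would prove this by introducing the directed graph $\vec{\Gamma}$ on $[n+1]$ with a directed edge $p \to pa_i$ whenever $pa_i$ is defined, and $p \to n+1$ whenever $p \notin \dom(a_i)$. Reading off successive images along a product $t = a_{i_1}\cdots a_{i_m}$ shows that $t$ is undefined at $q$ iff there is a directed path in $\vec{\Gamma}$ from $q$ to $n+1$. Let $R := \{p \in [n] : \text{no such directed path exists}\}$. By definition each $p \in R$ lies in every $\dom(a_i)$ and satisfies $pa_i \in R$, so each $a_i$ restricts to an injection $R \to R$, hence a bijection by finiteness. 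Consequently every edge of $\Gamma(A,[n+1])$ incident to a vertex of $R$ stays inside $R$ and avoids $n+1$, so every component of $\Gamma(A,[n+1])$ meeting $R$ is contained in $R$ and disjoint from $n+1$. By hypothesis each such component is a singleton, so $R \subseteq F$, proving the key lemma.

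Given the key lemma, I would pick $s^* \in S$ of minimal $|\dom(s^*)|$; by the initial observation $F \subseteq \dom(s^*)$. If this inclusion were strict, take $p \in \dom(s^*) \setminus F$ and set $q = s^*(p) \in D$; picking $t \in S$ undefined at $q$ gives $p \notin \dom(s^* t) \subsetneq \dom(s^*)$, contradicting minimality. So $\dom(s^*) = F$, whence $s^* = \id_F \in S$, and a direct check gives $\id_F \cdot s = \id_F = s \cdot \id_F$ for every $s \in S$, exhibiting $\id_F$ as the zero. The main obstacle is precisely the converse: the component hypothesis is genuinely undirected, so from an undirected path $q - q_1 - \cdots - n+1$ in $\Gamma$ one cannot directly read off a product of generators witnessing undefinedness at $q$, because some steps may require traversing an edge ``backwards'' along a generator whose inverse is not in $S$. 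The closure argument for $R$ sidesteps this by never constructing paths explicitly, instead using finiteness to force any generator-invariant trap set to consist of singletons.
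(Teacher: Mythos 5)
Your proof is correct, and its overall skeleton matches the paper's: both directions hinge on the observation that the set $F$ of common fixed points of the generators is exactly the union of the singleton components, and the converse produces the zero $\id_F$ by taking an element of minimal domain and shrinking it via elements undefined at points of the big component. The genuine difference lies in how the key claim is established — namely that every $q \notin F$ is excluded from the domain of some $s \in S$. The paper takes an undirected path from $q$ to $n+1$ in $\Gamma(A,[n+1])$ and asserts that generators $a_{i_1},\dots,a_{i_{m+1}}$ realize each step in the forward direction; as you correctly observe, this is not immediate from the definition of $\Gamma$, since an edge $(y_j,y_{j+1})$ may only be witnessed by $y_{j+1}a_i = y_j$, and $S$ is not closed under inverses. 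Your closure argument for the trap set $R$ of points admitting no directed path to $n+1$ — showing $R$ is invariant under every generator, hence a union of components avoiding $n+1$, hence contained in $F$ by the hypothesis — proves the same claim without requiring any particular undirected path to be forward-traversable, and shows a posteriori that a forward path always exists in this situation. Your forward direction is likewise phrased differently (every generator acts as a bijection on a trapped non-singleton component, and right-absorption by the zero forces it to act as the identity there, killing all edges) where the paper derives an immediate injectivity contradiction from $q0 = p0$; the two are equivalent in substance. The net effect is that your version closes a small gap in the paper's argument, at the modest cost of introducing the auxiliary directed graph $\vec{\Gamma}$.
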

\begin{proof}
$\Rightarrow$: Let $0=a_{i_1} \cdots a_{i_m}$ be the zero element of $S$. We claim that the only connected component of $\Gamma(A,[n+1])$ that contains more than one vertex is the connected component containing $n+1$. Pick any edge $(p,q) \in [n]^2$ in the graph of $\Gamma(A,[n+1])$ such that $p \neq q$. Let $a \in \{a_1,\dots,a_k\}$ be the generator such that, WLOG, $pa=q$. If $q \in \dom(0)$, $q0 = pa0 = p0$ contradicting that $0$ is a partial bijection. Therefore, $q \not \in \dom(0)$, which means there exists $\ell \in [m]$ such that $q a_{i_1} \cdots a_{i_{\ell-1}} \not \in \dom(a_{i_\ell})$. That is, $q$ is connected to $n+1$ in $\Gamma(A,[n+1])$.

$\Leftarrow$: Let $\{x_1,\dots,x_m\} \subseteq [n+1]$ be the connected component containing $n+1$ and assume every other connected component has exactly one vertex. We first claim that for every point $x$ in the connected component of $n+1$, there exists $s \in S$ such that $x \not \in \dom(s)$. Let $(x,y_1),(y_1,y_2),\dots,(y_m,n+1)$ be a path from $x$ to $n+1$. Then there exist generators $a_{i_1},\dots,a_{i_{m+1}}$ such that $xa_{i_1}\cdots a_{i_m} = y_m$ and $y_m \not \in \dom(a_{i_{m+1}})$. Thus, $x \not \in \dom(a_{i_1} \cdots a_{i_{m+1}})$.

Let $s \in S$ be such that $|\{x_1,\dots,x_m\} \cap \dom(s)|$ is of minimal size and suppose for the sake of contradiction that some point $x_i \in \{x_1,\dots,x_m\} \cap \dom(s)$. Then because $\{x_1,\dots,x_m\}$ is a connected component, $x_is \in \{x_1,\dots,x_m\}$. By the claim proved above, there exists $s_i \in S$ such that $x_is \not \in \dom(s_i)$. But then $|\{x_1,\dots,x_m\} \cap \dom(s)| > |\{x_1,\dots,x_m\} \cap \dom(ss_i)|$, contradicting the choice of $s$. Thus, $\{x_1,\dots,x_m\} \cap \dom(s) = \emptyset$. We now want to prove that this $s$ is the zero element for $S$. Note that in a partial bijection semigroup, any right zero is also a left zero, so it is enough to show that $s$ is a right zero.

Since $\{x_1,\dots,x_m\}$ is a connected component, $x_ia_j \in \{x_1,\dots,x_m\}$ for each $i \in [m]$ and each $j \in [k]$. Thus, $x_i \not \in \dom(a_j0)$. Now consider any $x \not \in \{x_1,\dots,x_m\}$. Because it is not connected to any other point, it is fixed by every generator, so $xa_i0 = x0$.
\end{proof}

\begin{theorem}
Given generators $a_1,\dots,a_k \in I_n$, there is a deterministic logspace algorithm for checking if the generated partial bijection semigroup has a zero.
\end{theorem}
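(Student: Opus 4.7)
The plan is to apply Lemma~\ref{lem:zero} directly and reduce the decision to a connectivity question on the transformation graph $\Gamma(A,[n+1])$. By that lemma, $S$ has a zero element if and only if every vertex $p \in [n]$ is either isolated in $\Gamma(A,[n+1])$ or lies in the same connected component as $n+1$. So the algorithm just needs to verify this disjunction for each $p \in [n]$.

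I would iterate over $p \in [n]$ in an outer loop using $O(\log n)$ space for the counter. For each $p$, the first substep is to check whether $p$ has any neighbor in $\Gamma(A,[n+1])$; this is first-order definable over points and generators, since one only asks whether there exist $q \in [n]$ and $i \in [k]$ with $p a_i = q$ or $q a_i = p$, or whether $p \notin \dom(a_i)$ for some $i$. Hence this substep is in $\AC^0 \subseteq \LOGSPACE$. If $p$ has no neighbor, move on to the next $p$. Otherwise, invoke Reingold's deterministic logspace algorithm for undirected $s$-$t$ connectivity to decide whether $p$ and $n+1$ lie in the same connected component of $\Gamma(A,[n+1])$. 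The edge oracle that algorithm requires is exactly the first-order check above, so edges can be produced on demand in logspace from the generator table; no explicit construction of $\Gamma(A,[n+1])$ is needed. If $p$ has a neighbor but is not connected to $n+1$, reject; if every $p \in [n]$ passes, accept.

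Correctness is immediate from Lemma~\ref{lem:zero}, since the accepting condition is precisely the statement that every connected component of $\Gamma(A,[n+1])$ other than the one containing $n+1$ is a singleton. The main (and essentially only) nontrivial ingredient is Reingold's theorem that undirected reachability lies in $\LOGSPACE$; everything else is a composition of first-order checks with logspace counters. Since neither the outer loop, the neighbor check, nor the Reingold subroutine uses more than $O(\log n)$ space, the overall procedure runs in deterministic logspace.
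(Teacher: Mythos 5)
Your approach is essentially the paper's: reduce via Lemma~\ref{lem:zero} to checking, for each $p \in [n]$, that $p$ either lies in a singleton component of $\Gamma(A,[n+1])$ or is connected to $n+1$, delegate the connectivity test to Reingold's deterministic logspace algorithm with the edges supplied on demand by a first-order oracle, and observe that the outer loop needs only $O(\log n)$ counters. There is, however, one slip in your neighbor test: the formula ``there exist $q \in [n]$ and $i \in [k]$ with $p a_i = q$ or $q a_i = p$'' does not exclude $q = p$, so a point $p$ that is fixed by some generator acquires the self-loop edge $(p,p)$ and is classified as ``having a neighbor'' even when its component is the singleton $\{p\}$; your algorithm would then run Reingold's test, find $p$ not connected to $n+1$, and reject, although Lemma~\ref{lem:zero} permits singleton components anywhere. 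Concretely, if the single generator is the identity on $[n]$, the generated semigroup has a zero (the identity itself), yet your procedure rejects. The fix is simply to require $q \neq p$ in the existential check, which is exactly how the paper phrases its condition (``if $x = a$ and $x \neq b$, then $x$ is connected to $n+1$''). With that one-word correction your argument is complete and coincides with the published proof.
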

\begin{proof}
By Lemma~\ref{lem:zero}, we need only check that every connected component of $\Gamma(\{a_1,\dots,a_k\},[n+1])$ contains exactly one vertex except perhaps the component containing $n+1$. We can do this by checking that each vertex $x \in [n]$ and each edge $(a,b)$ satisfies the following expression: if $x=a$ and $x \neq b$, then $x$ is connected to $n+1$. We only need $3\log(n)$ space to store $x$, $a$, and $b$. We only need logarithmic space to run Reingold's algorithm to check if $x$ is connected to $n+1$ \cite{OR:UC}.
\end{proof}

We now consider these problems in the context of inverse semigroups. Since inverse semigroups are regular and the only regular nilpotent semigroup is the trivial semigroup, checking nilpotency for inverse semigroups is trivial. For two elements of an inverse semigroup $a,b \in S$: (1) $a \gR b$ iff $\dom(a) = \dom(b)$ and (2) $a \gL b$ iff $[n]a = [n]b$ \cite[Prop 5.1.2]{HO:FS}. This immediately yields the following results.

\begin{theorem}
Given generators $a_1,\dots,a_k \in I_n$ and $\gI \, \in \{\gR,\gL,\gH\}$, checking whether $a_1 \gI a_2$ in the generated inverse semigroup is in $\AC^0$.
\end{theorem}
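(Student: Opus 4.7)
The plan is to use the characterization stated immediately before the theorem: for elements $a, b$ of a finite inverse semigroup (embedded in some $I_n$), $a \gR b$ iff $\dom(a) = \dom(b)$, $a \gL b$ iff $[n]a = [n]b$, and $a \gH b$ iff both hold. One small thing to verify is that this characterization really applies in the inverse subsemigroup $S$ generated by $a_1,\dots,a_k$, not merely in the ambient $I_n$. This follows because $S$ is itself an inverse semigroup: if $\dom(a_1) = \dom(a_2)$ then $a_1 a_1^{-1} = a_2 a_2^{-1} \in S$, so $a_1 = a_1 a_1^{-1} a_1 = a_2 a_2^{-1} a_1 \in a_2 S$ and symmetrically $a_2 \in a_1 S$, giving $a_1 \gR a_2$ in $S$; the converse direction $a_1 \gR a_2 \Rightarrow \dom(a_1) = \dom(a_2)$ is immediate from writing $a_1 = a_2 x$ and $a_2 = a_1 y$ with $x, y \in S^1$.

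Given this, checking $a_1 \gR a_2$ reduces to verifying the first-order sentence
\[
\forall x \in [n] : \bigl(x \in \dom(a_1) \,\leftrightarrow\, x \in \dom(a_2)\bigr),
\]
and checking $a_1 \gL a_2$ reduces to
\[
\forall y \in [n] : \bigl(\exists x \in [n]\,(x a_1 = y) \,\leftrightarrow\, \exists x \in [n]\,(x a_2 = y)\bigr),
\]
while $\gH$ is the conjunction of the two. Each formula is quantified only over points of $[n]$ and refers to the two named generators $a_1, a_2$; no quantification over the generated semigroup is required.

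By Immerman's theorem equating $\AC^0$ with first-order definability (already invoked in the preliminaries), each of these decision problems lies in $\AC^0$, which is the desired conclusion.

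There is essentially no technical obstacle here; the only subtlety is the one flagged in the first paragraph, namely confirming that Howie's description of $\gR$ and $\gL$ in $I_n$ transfers verbatim to an inverse subsemigroup. Once that is in hand, the proof is a direct application of the $\AC^0$ = \FO\ correspondence.
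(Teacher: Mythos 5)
Your proof is correct and follows the same route the paper takes: the paper derives the theorem immediately from Howie's characterization of $\gR$ and $\gL$ in inverse semigroups via domains and ranges (which, as you verify, holds in the generated subsemigroup because the idempotents $aa^{-1}$ and $a^{-1}a$ lie in it), combined with the first-order definability of equality of domains and ranges over $[n]$. Your explicit verification of the transfer to the subsemigroup and the written-out first-order formulas are details the paper leaves implicit, but the argument is the same.
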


Furthermore, every $\gR$-($\gL$-)class in a regular semigroup has an idempotent, so a regular semigroup is $\gR$-($\gL$-)trivial iff each of its elements is idempotent. The idempotents in inverse semigroups commute, so an inverse semigroup is $\gR$-($\gL$-)trivial iff it is a semilattice, which places the problem in $\AC^0$ by Cor~\ref{cor:naive}.

\begin{corollary}
Given generators $a_1,\dots,a_k \in I_n$, checking if the generated inverse semigroup is $\gR$-trivial or $\gL$-trivial are both in $\AC^0$.
\end{corollary}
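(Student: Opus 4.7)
The plan is to reduce both checks to the semilattice check already shown to lie in $\AC^0$ by Corollary~\ref{cor:naive}(2). First I would invoke a standard fact about regular semigroups: every $\gR$-class (and every $\gL$-class) of a regular semigroup contains an idempotent, since for any $a$ with chosen inverse $a'$, the element $aa'$ is idempotent and $\gR$-related to $a$ (dually $a'a$ is idempotent and $\gL$-related to $a$). Inverse semigroups are regular, so in an $\gR$-trivial (resp.\ $\gL$-trivial) inverse semigroup every element is forced to coincide with its idempotent representative and hence is itself idempotent.

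Next I would use the characterizing property of inverse semigroups among regular semigroups, namely that their idempotents commute. Combined with the previous paragraph, this means that if $S$ is $\gR$-trivial or $\gL$-trivial then every element of $S$ is idempotent and all elements commute, so $S$ is a semilattice. The converse is immediate: in a semilattice, $aS^1 = bS^1$ gives $a = bx$ and $b = ay$ for some $x,y \in S^1$, and using commutativity of idempotents one obtains $ab = b$ and $ba = a$, whence $a = b$; the $\gL$-case is symmetric. Hence $\gR$-triviality, $\gL$-triviality, and being a semilattice are all equivalent for a finite inverse semigroup.

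The final step is to apply Corollary~\ref{cor:naive}(2) to the generating set $\{a_1,\dots,a_k,a_1^{-1},\dots,a_k^{-1}\}$ of the inverse semigroup, which already places the semilattice check in $\AC^0$. There is no real obstacle here, since the $\AC^0$ upper bound for the semilattice check is essentially the statement that one need only verify, via a first-order formula quantified over points and generators, that each generator is idempotent and that any two generators commute; the structural equivalence above is what justifies applying this to the $\gR$- and $\gL$-triviality questions.
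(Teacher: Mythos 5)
Your proposal is correct and follows essentially the same route as the paper: regularity forces an idempotent in every $\gR$-class ($\gL$-class), so $\gR$-triviality ($\gL$-triviality) makes every element idempotent, commutativity of idempotents in inverse semigroups then yields a semilattice, and the semilattice check is in $\AC^0$ by Corollary~\ref{cor:naive}. The only difference is that you spell out the converse direction explicitly, which the paper leaves implicit.
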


\begin{lemma} \label{lem:comIDinv}
Let $S$ be the inverse semigroup generated by $a_1,\dots,a_k \in I_n$. Then all of the idempotents of $S$ are central iff $S$ is completely regular.
\end{lemma}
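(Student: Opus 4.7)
The plan is to prove both directions by elementary manipulations using the basic inverse-semigroup identities $ss^{-1}s = s$ and $s^{-1}ss^{-1} = s^{-1}$, together with the fact that idempotents in an inverse semigroup always commute. Recall that $S$ is completely regular iff every element lies in some subgroup, and for an inverse semigroup this is equivalent to $ss^{-1} = s^{-1}s$ for every $s \in S$.

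For the direction \emph{central idempotents imply completely regular}, I fix $s \in S$ and set $e := ss^{-1}$ and $f := s^{-1}s$; the goal is $e = f$. Using centrality of $e$, we have $es^{-1} = s^{-1}e$, whose right-hand side collapses to $s^{-1}ss^{-1} = s^{-1}$, so $ss^{-1}s^{-1} = s^{-1}$; right-multiplying by $s$ gives $ef = f$. The symmetric calculation with $f$ central yields $e = ef$, and hence $e = f$, as required.

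For the direction \emph{completely regular implies central idempotents}, I fix an idempotent $e$ and an element $s$ and aim to show $es = se$. I would first treat the special case $e \leq ss^{-1}$ (that is, $ess^{-1} = e$). In that case $(es)^{-1} = s^{-1}e$, and using commutativity of idempotents one computes $(es)(es)^{-1} = ess^{-1}e = e$ and $(es)^{-1}(es) = s^{-1}es$; complete regularity applied to $es$ then forces $s^{-1}es = e$, and left-multiplying by $s$ and once more using that $ss^{-1}$ commutes with $e$ gives $se = ss^{-1}es = es$. For an arbitrary idempotent $e$, I set $e' := ess^{-1}$; then $e' \leq ss^{-1}$, so the special case gives $e's = se'$, and direct simplification shows $e's = es$ (via $ss^{-1}s = s$) and $se' = se$ (using commutativity of idempotents together with $s\cdot ss^{-1} = s \cdot s^{-1}s = s$, the middle equality being precisely complete regularity).

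The only real subtlety is bookkeeping: every step uses either a basic identity, commutativity of idempotents, or one of the hypotheses, but one must be careful to apply centrality or complete regularity to the right element at each step. In particular, in the reverse direction the identity $ss^{-1} = s^{-1}s$ is crucial in the final reduction $s \cdot ss^{-1} = s$, which would otherwise fail in a general inverse semigroup.
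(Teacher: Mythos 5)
Your proof is correct, but it takes a genuinely different route from the paper's. The paper argues concretely inside $I_n$: for the forward direction it uses centrality of the idempotent $ss^{-1}$ to get $sss^{-1} = ss^{-1}s = s$, reads this off as saying that $s$ maps its image back into its domain, and concludes (using finiteness of $[n]$) that $s$ permutes its domain and hence generates a cyclic subgroup; for the converse it invokes the characterization from \cite{TJ:PB} that a partial bijection semigroup is completely regular iff $\dom(s_1s_2) = \dom(s_1) \cap \dom(s_2)$ for all $s_1,s_2$, deduces $\dom(es) = \dom(se)$ for idempotent $e$, and finishes using that an idempotent of $I_n$ is the identity on its domain. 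Your argument, by contrast, is purely equational: it never mentions points or domains, only $ss^{-1}s = s$, commutativity of idempotents, and the characterization of complete regularity by $ss^{-1} = s^{-1}s$. Both of your computations check out --- in particular the reduction of a general idempotent $e$ to $e' = ess^{-1} \leq ss^{-1}$ and the final step $se' = sss^{-1}e = se$, which, as you correctly flag, genuinely needs $ss^{-1} = s^{-1}s$. What your approach buys is generality: it proves the statement for arbitrary (even infinite) inverse semigroups, recovering the standard fact that the inverse semigroups with central idempotents are exactly the Clifford semigroups, whereas the paper's forward direction leans on finiteness. What the paper's approach buys is brevity and consistency with the domain-based toolkit used throughout (and only the $I_n$ case is needed for the $\AC^0$ application). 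The one point you should make explicit is the provenance of the equivalence ``completely regular iff $ss^{-1} = s^{-1}s$ for all $s$'': it is standard (an element lies in a subgroup iff its $\gH$-class contains an idempotent, and $ss^{-1}$, $s^{-1}s$ are the idempotents of the $\gR$- and $\gL$-classes of $s$), but it carries real weight in both of your directions.
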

\begin{proof}
$\Rightarrow$: Since $ss^{-1}$ is idempotent and $ss^{-1}s = s$, then $sss^{-1} = s$. Thus, $s$ permutes its image and generates a subgroup.

$\Leftarrow$: The proof of \cite[Thm 3.5]{TJ:PB} demonstrates that a partial bijection semigroup is completely regular iff $\dom(s_1s_2) = \dom(s_1) \cap \dom(s_2)$ for every $s_1,s_2 \in S$. Thus, for any idempotent $e \in S$, $\dom(es) = \dom(se)$. Since $e$ fixes its domain and range, $es = se$.
\end{proof}

\begin{theorem}
Given generators $a_1,\dots,a_k \in I_n$, checking whether every idempotent in the generated inverse semigroup is central is in $\AC^0$.
\end{theorem}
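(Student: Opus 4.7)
The plan is to reduce the problem to one we already know is in $\AC^0$. By Lemma~\ref{lem:comIDinv}, every idempotent of the generated inverse semigroup $S$ is central if and only if $S$ is completely regular. So it suffices to check complete regularity of $S$ in $\AC^0$.

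Now, as noted in the preliminaries, the inverse semigroup generated by $a_1,\dots,a_k$ coincides with the partial bijection semigroup generated by $a_1,\dots,a_k,a_1^{-1},\dots,a_k^{-1}$. Computing each $a_i^{-1}$ from $a_i$ is a first-order operation on the graph of $a_i$ (swap domain and range), so the extended generating set is produced by an $\AC^0$ pre-processing step. Applying Corollary~\ref{cor:naive}(6) to this extended set decides complete regularity of $S$ in $\AC^0$.

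Chaining the two steps — the equivalence from Lemma~\ref{lem:comIDinv} and the $\AC^0$ algorithm from \cite[Cor 3.6]{TJ:PB} on the closed-under-inverse generating set — yields the desired $\AC^0$ bound. No step presents a genuine obstacle: the only subtlety to flag is that Corollary~\ref{cor:naive}(6) is stated for partial bijection semigroups, but this subtlety disappears once one observes that the inverse-closure of the generators can be formed in $\AC^0$ and generates the same semigroup.
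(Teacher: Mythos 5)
Your proof is correct and is essentially the paper's own argument: the paper likewise combines Lemma~\ref{lem:comIDinv} with Corollary~\ref{cor:naive}(6). Your extra remark about forming the inverse-closure of the generators in $\AC^0$ is a harmless (and reasonable) elaboration of a detail the paper leaves implicit.
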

\begin{proof}
This follows immediately from Corollary~\ref{cor:naive} and Lemma~\ref{lem:comIDinv}.
\end{proof}

\section{Decision Problems Involving Conjugacy}
The preceding results have immediate applications to results from \cite{AK:CI} and \cite{AK:FN}.

\begin{corollary}
Given generators $a_1,\dots,a_k \in T_n$, let $S$ be the generated transformation semigroup. Both of the following problems are in $\AC^0$: (a) checking if $\sim_o$ is the identity relation on $S$ and (b) checking if $\sim_{p^*}$ is the identity relation on $S$.
\end{corollary}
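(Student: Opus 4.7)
The plan is to reduce each property to a conjunction of elementary algebraic conditions already shown to lie in $\AC^0$ by Corollary~\ref{cor:naive}. Concretely, I will establish two structural equivalences that hold for any finite semigroup: (b) $\sim_{p^*}$ is the identity on $S$ if and only if $S$ is commutative, and (a) $\sim_o$ is the identity on $S$ if and only if $S$ is an abelian group. Both are in the spirit of the structural results in \cite{AK:FN} and \cite{AK:CI}.

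For (b), one direction is essentially definitional: since $uv \sim_p vu$ for every $u,v \in S$, if $\sim_{p^*}$ is contained in the identity relation then so is $\sim_p$, which forces $uv = vu$ for every $u,v$ and hence $S$ is commutative. Conversely, commutativity yields $a = uv = vu = b$ whenever $a \sim_p b$, so $\sim_p$ already equals the identity, and the transitive closure preserves this. Checking commutativity is in $\AC^0$ by Corollary~\ref{cor:naive}(1), which handles (b).

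For (a), the backward direction reduces to the well-known fact that $\sim_o$ coincides with ordinary conjugacy on a group (via the correspondence $ag = gb \Leftrightarrow b = g^{-1}ag$), and conjugacy is trivial precisely when the group is abelian. For the forward direction, the inclusion $\sim_p \subseteq \sim_o$ combined with the argument from (b) forces $S$ to be commutative. To promote \emph{commutative} to \emph{abelian group}, I observe that if $ax = ay$ in a commutative $S$, then choosing $g = h = a$ gives $xg = xa = ax = ay = gy$ and symmetrically $yh = hx$, so $x \sim_o y$. Hence $\sim_o$ being the identity forces $S$ to be cancellative, and a finite commutative cancellative semigroup is an abelian group (the translation $\lambda_a$ is injective, hence bijective, which produces an identity element and inverses). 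So (a) reduces to checking commutativity together with the group property, both in $\AC^0$ by Corollary~\ref{cor:naive}(1) and (3).

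The only genuinely nontrivial step is the extraction of cancellativity from the hypothesis that $\sim_o$ is the identity relation; everything else is chasing containments in the hierarchy $\sim_p \subseteq \sim_{p^*} \subseteq \sim_o$ and citing Corollary~\ref{cor:naive}. No new first-order descriptions need to be constructed for the semigroups in question, since the reductions bottom out in conditions already catalogued as $\AC^0$-decidable.
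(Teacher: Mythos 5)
Your proposal is correct and follows the same overall reduction as the paper: both arguments bottom out in the two characterizations ``$\sim_{p^*}$ is the identity iff $S$ is commutative'' and ``$\sim_o$ is the identity iff $S$ is a commutative group,'' and then invoke the $\AC^0$ tests for commutativity and for being a group. The only difference is that the paper simply cites these characterizations (as \cite[Thm 5.4]{AK:FN} and \cite[Cor 5.8]{AK:FN}), whereas you prove them from scratch. Your derivations are sound: the observation $uv \sim_p vu$ handles $\sim_{p^*}$; the inclusion $\sim_p \subseteq \sim_o$ (or directly $uv\cdot u = u\cdot vu$ and $vu\cdot v = v\cdot uv$) forces commutativity from triviality of $\sim_o$; and the extraction of cancellativity via $g=h=a$, followed by the fact that a finite cancellative commutative semigroup is an abelian group, is exactly the content of the cited corollary. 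So you have gained a self-contained proof at the cost of a little extra work, but the decomposition and the complexity-theoretic conclusion are identical to the paper's.
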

\begin{proof}
First recall that the problems of checking if a transformation semigroup is commutative or if it is a group are both in $\AC^0$ by \cite[Thm 3.2]{FJ:CP} and \cite[Thm 3.5]{FJ:CP}. By \cite[Cor 5.8]{AK:FN}, $\sim_o$ is the identity relation iff $S$ is a commutative group. By \cite[Thm 5.4]{AK:FN}, $\sim_p$ is the identity relation iff $S$ is commutative. Of course, $\sim_p$ is the identity relation iff its transitive closure is.
\end{proof}

\begin{corollary}
Given generators $a_1,\dots,a_k \in I_n$, the problem of checking if $\sim_i$ is the identity relation for the generated inverse semigroup $S$ is in $\AC^0$.
\end{corollary}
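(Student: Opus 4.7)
The plan is to reduce the decision problem to checking commutativity of the generated inverse semigroup, mirroring the approach of the preceding corollary. The key fact I rely on is the characterization: for a finite inverse semigroup $S$, the relation $\sim_i$ is the identity on $S$ iff $S$ is commutative. Granting this, the conclusion is immediate from Corollary~\ref{cor:naive}(1), applied to the inverse semigroup generated by $a_1,\dots,a_k$ (which coincides with the partial bijection semigroup $\langle a_1,\dots,a_k,a_1^{-1},\dots,a_k^{-1}\rangle$).

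The forward direction of the characterization is a short calculation in the natural partial order on $S$. If $S$ is commutative and $a \sim_i b$ via some $s \in S$, then $b = sas^{-1} = a\cdot ss^{-1}$ and $a = s^{-1}bs = b\cdot s^{-1}s$. Because $ss^{-1}$ and $s^{-1}s$ are idempotents, these equations express $b \leq a$ and $a \leq b$ in the natural partial order on $S$, and antisymmetry forces $a = b$.

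The hard part, and where I expect the main obstacle, is the converse: that $\sim_i$ being the identity on $S$ forces $S$ to be commutative. The plan is to split this into two steps: (i) every maximal subgroup is abelian, since on a group $\gH$-class the relation $\sim_i$ restricts to ordinary group conjugacy, so any nontrivial conjugation would produce two distinct $\sim_i$-related elements; and (ii) every idempotent is central, which I would prove by exhibiting, from any pair $e \in E(S)$ and $a \in S$ with $ea \neq ae$, two distinct $\sim_i$-related elements inside $S$. Together (i) and (ii) give that $S$ is a Clifford semigroup with abelian maximal subgroups, and a direct calculation in the strong-semilattice-of-groups decomposition shows this is equivalent to $S$ being commutative. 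If such a converse is already available in \cite{AK:CI}, the corollary is just that citation combined with Corollary~\ref{cor:naive}(1); otherwise, it must be proved as sketched here, after which the $\AC^0$ bound again follows from Corollary~\ref{cor:naive}(1).
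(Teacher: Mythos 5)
Your proposal is correct and takes essentially the same route as the paper: the characterization you rely on is precisely \cite[Thm 5.9]{AK:CI} ($\sim_i$ is the identity relation iff $S$ is commutative), so the sketched converse need not be carried out, and the paper simply combines that citation with the $\AC^0$ commutativity test of \cite[Thm 3.2]{FJ:CP} (Corollary~\ref{cor:naive}(1)), exactly as you do.
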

\begin{proof}
By \cite[Thm 5.9]{AK:CI}, $\sim_i$ is the identity relation iff $S$ is commutative and checking if $S$ is commutative is in $\AC^0$ by \cite[Thm 3.2]{FJ:CP}.
\end{proof}

\begin{corollary}
Given generators $a_1,\dots,a_k \in T_n$, checking if $\sim_{tr}$ is the identity relation on the generated semigroup is in $\NL$. For partial bijection semigroups, this problem is in $\AC^0$.
\end{corollary}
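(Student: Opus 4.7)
The plan is to reduce both claims to a single combinatorial characterization and then read off the complexity. Specifically, I would first prove that $\sim_{tr}$ is the identity on a finite semigroup $S$ if and only if $S$ is both commutative and completely regular (equivalently, $S$ is a commutative Clifford semigroup).

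For the forward direction, the containment $\sim_p \subseteq \sim_{tr}$ forces $\sim_p$ to also be the identity, so $S$ is commutative by \cite[Thm 5.4]{AK:FN}. To see that $S$ must also be completely regular, I would suppose for contradiction that some $a \in S$ satisfies $a \neq a^{\omega+1}$ and set $b := a^{\omega+1}$. A short computation inside the maximal subgroup $H_{a^\omega}$ shows $b^\omega = a^\omega$ and $b^{\omega+1} = a^{\omega+1}$. Taking the mutually inverse idempotents $g = h = a^\omega$, one checks $hg = gh = a^\omega = b^\omega$ and $g a^{\omega+1} h = a^{\omega+1} = b^{\omega+1}$, so $a \sim_{tr} b$ with $a \neq b$, a contradiction.

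For the reverse direction, I would assume $S$ is commutative and completely regular, with $a \sim_{tr} b$ witnessed by mutually inverse $g,h \in S$. Commutativity forces $hg = gh$ and hence $a^\omega = b^\omega$, while the identity $g a^{\omega+1} h = a^{\omega+1} \cdot gh$ collapses to $a^{\omega+1}$ (using $a^{\omega+1} \in H_{a^\omega}$), giving $a^{\omega+1} = b^{\omega+1}$. Complete regularity then yields $a = a^{\omega+1} = b^{\omega+1} = b$.

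With the characterization in hand, both parts of the corollary are immediate. For partial bijection semigroups, commutativity and complete regularity are each in $\AC^0$ by Corollary~\ref{cor:naive}(1) and~(6), so their conjunction is in $\AC^0$. For transformation semigroups, commutativity is in $\AC^0$ by \cite[Thm 3.2]{FJ:CP}, and complete regularity is in $\NL$ via a standard nondeterministic simulation: guess a product of generators one factor at a time, store only the current transformation $s$, check $|[n]s| \neq |[n]s^2|$, and invoke $\NL = \co\NL$. The main obstacle will be cleanly handling the group structure of $H_{a^\omega}$ in the forward direction of the characterization, so that the witnesses $g = h = a^\omega$ can be verified to certify $a \sim_{tr} a^{\omega+1}$; the subsequent complexity bookkeeping is then routine.
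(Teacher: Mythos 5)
Your overall structure matches the paper's: reduce the question to the characterization ``$\sim_{tr}$ is the identity iff $S$ is commutative and completely regular,'' then appeal to known complexity bounds for those two properties. The difference is that the paper simply cites this characterization as \cite[Thm 5.5]{AK:FN}, whereas you re-derive it. Your derivation is correct and is a nice self-contained addition: the forward direction via $\sim_p \subseteq \sim_{tr}$ and the witness $g=h=a^\omega$ certifying $a \sim_{tr} a^{\omega+1}$ checks out (one verifies $b^\omega = a^\omega$, $b^{\omega+1}=a^{\omega+1}$, and $ga^{\omega+1}h = a^{3\omega+1}=a^{\omega+1}$ for $b:=a^{\omega+1}$), and the reverse direction using commutativity to get $a^\omega=b^\omega$ and then $a^{\omega+1}=b^{\omega+1}$ is fine. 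The worry you flag about ``handling the group structure of $H_{a^\omega}$'' is a non-issue; everything is a direct power computation with the idempotent $a^\omega$.

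There is, however, one concrete flaw in the complexity bookkeeping: your proposed $\NL$ algorithm for complete regularity of a transformation semigroup ``guesses a product of generators one factor at a time, storing only the current transformation $s$.'' Storing a full transformation of $[n]$ takes $\Theta(n\log n)$ bits, which is not logarithmic space, so that simulation is not an $\NL$ algorithm. The correct move --- and what the paper does --- is to cite \cite[Thm 5.6]{FJ:CP}, whose $\NL$ procedure avoids materializing elements of the semigroup by working with a pointwise (reachability-based) characterization of complete regularity. With that citation substituted for your sketch, the rest of your complexity argument (commutativity in $\AC^0$ in both settings, complete regularity in $\AC^0$ for partial bijection semigroups by \cite[Thm 3.5]{TJ:PB}, and closure of $\AC^0$ under conjunction) is correct and agrees with the paper.
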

\begin{proof}
By \cite[Thm 5.5]{AK:FN}, $\sim_{tr}$ is the identity relation iff the generated semigroup is commutative and completely regular. The latter condition can be checked in $\NL$ by \cite[Thm 5.6]{FJ:CP}. For partial bijection semigroups, this condition can be checked in $\AC^0$ by \cite[Thm 3.5]{TJ:PB}.
\end{proof}

We know that for any two elements $a,b \in I_n$, $a \sim_i b$ in $I_n$ iff $a$ and $b$ have the same cycle-chain type \cite[Thm 2.10]{AK:CI}. Similarly, $a \sim_{p^*} b$ iff $a \sim_{tr} b$ iff $a$ and $b$ have the same cycle type \cite[Thm 10]{KM:OC}. The authors in \cite[Lemma 14]{KM:OC} prove that $a \sim_p b$ iff (1) $|\Delta_a^k| = |\Delta_b^k|$ for each $k \in [n]$ and (2) there exists a partial bijection between their chains satisfying several conditions. We now prove a modified version of those conditions to obtain a polynomial-time algorithm to check if $a \sim_p b$.

For any $a,b \in I_n$, let $\phi: \Theta_a^{>1} \hookrightarrow \Theta_b$ and $\psi: \Theta_b^{>1} \hookrightarrow \Theta_a$. We call $(\phi,\psi)$ a \emph{satisfying pair} if $|\theta| \leq |\phi(\theta)|+1$ and $|\theta'| \leq |\psi(\theta')|+1$ for every $\theta \in \Theta_a^{>1}$ and every $\theta' \in \Theta^{>1}$. The pair is an \emph{inversive satisfying pair} if, additionally, $\phi \circ \psi$ and $\psi \circ \phi$ are both idempotent.

\begin{lemma} \label{lem:pConj}
For any $a,b \in I_n$, the following are equivalent:
\begin{enumerate}
\item $a \sim_p b$ in $I_n$;
\item $|\Delta_a^k| = |\Delta_b^k|$ for each $k \in [n]$ and there exists a satisfying pair; and
\item $|\Delta_a^k| = |\Delta_b^k|$ for each $k \in [n]$ and there exists an inversive satisfying pair.
\end{enumerate}
\end{lemma}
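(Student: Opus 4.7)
My plan is to establish the cycle of implications $(1) \Rightarrow (3) \Rightarrow (2) \Rightarrow (1)$, noting that $(3) \Rightarrow (2)$ is immediate from the definitions, since every inversive satisfying pair is by definition a satisfying pair.

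For $(1) \Rightarrow (3)$, I would assume $a = uv$ and $b = vu$ for some $u, v \in I_n$ and use the identity $a^k = u(vu)^{k-1}v = ub^{k-1}v$ to check that $u$ carries each $k$-cycle $(x_1,\dots,x_k)$ of $a$ onto the $k$-cycle $(x_1 u,\dots,x_k u)$ of $b$, giving $|\Delta_a^k| = |\Delta_b^k|$. I would then define $\phi$ on a chain $[x_1,\dots,x_k] \in \Theta_a^{>1}$ to be the $b$-chain produced by iterating $b$ at $x_1 u$; to see this is well defined one notes that $x_1 \in \dom(a) \subseteq \dom(u)$, and that $x_1 u$ cannot lie in $\image(b)$ without forcing $x_1 \in \image(a)$, contradicting that $x_1$ begins an $a$-chain. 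Iterating $b$ yields the points $x_2 u, x_3 u, \dots$, which remain defined so long as $x_i \in \dom(u)$; analysing whether $x_k \in \dom(u)$ shows the image chain has length $k$ or $k-1$, confirming $|\theta| \le |\phi(\theta)|+1$. Define $\psi$ symmetrically via $v$. Injectivity of both maps follows from the injectivity of $u$ and $v$, and the inversive condition is checked by observing that $\psi \circ \phi$ and $\phi \circ \psi$, wherever both applications are defined, already fix their outputs.

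For $(2) \Rightarrow (1)$, I would build $u$ and $v$ explicitly. Using $|\Delta_a^k| = |\Delta_b^k|$, fix a length-preserving bijection between $\Delta_a$ and $\Delta_b$ and, for each matched pair $(x_1,\dots,x_k) \leftrightarrow (y_1,\dots,y_k)$, set $x_i u := y_i$ and $y_i v := x_{(i \bmod k)+1}$. On the chain part, for each $\theta = [x_1,\dots,x_k] \in \Theta_a^{>1}$ with $\phi(\theta) = [y_1,\dots,y_m]$ (so $k \le m+1$), set $x_i u := y_i$ for $1 \le i \le \min(k,m)$, and symmetrically define $v$ from $\psi$. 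A case split by orbit type of each point then verifies $a = uv$ and $b = vu$.

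The main obstacle lies in the well-definedness of $u$ and $v$ in the $(2) \Rightarrow (1)$ step: a single chain of $a$ may be simultaneously an input of $\phi$ and an output of $\psi$, so the two corresponding assignments must agree. This is precisely what the inversive condition in $(3)$ enforces, which suggests the cleanest route is actually to insert the intermediate implication $(2) \Rightarrow (3)$ first: starting from a satisfying pair, iterate the compositions $\phi \circ \psi$ and $\psi \circ \phi$ and use finiteness of $\Theta_a^{>1}$ and $\Theta_b^{>1}$ to extract an idempotent power, then restrict to the stable part to obtain an inversive pair whose length inequalities still hold. With an inversive pair in hand, the rules defining $u$ and $v$ are forced to agree on overlaps, and the remaining verification of $a = uv$ and $b = vu$ reduces to a routine chain-by-chain and cycle-by-cycle check.
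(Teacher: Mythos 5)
Your overall architecture matches the paper's: you pass from $a=uv$, $b=vu$ to induced maps on chains, upgrade a satisfying pair to an inversive one, and then rebuild $u,v$ from an inversive pair. Your direct proof of $(1)\Rightarrow(3)$ (observing that the induced $\psi\circ\phi$ and $\phi\circ\psi$ are already partial identities, hence idempotent) is correct and in fact slightly cleaner than the paper, which proves only $(1)\Rightarrow(2)$ and reaches $(3)$ through $(2)$. However, there are two genuine gaps, both traceable to the possibility that $|\phi(\theta)|=|\theta|+1$.

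First, your $(2)\Rightarrow(3)$ step is not a proof. A satisfying pair requires $\phi$ to be total on $\Theta_a^{>1}$, so you cannot ``restrict to the stable part''; and on a cycle of $\psi\circ\phi$ you must genuinely re-pair $\phi$ and $\psi$ into mutual inverses, which taking an idempotent power of the composition does not accomplish. The re-pairing is not automatic: let $\Theta_a^{>1}$ consist of chains $\theta_1,\theta_2$ of lengths $4,2$ and $\Theta_b^{>1}$ of chains $\eta_1,\eta_2$ of lengths $3,5$, with $\phi(\theta_1)=\eta_1$, $\phi(\theta_2)=\eta_2$, $\psi(\eta_1)=\theta_2$, $\psi(\eta_2)=\theta_1$; this is a satisfying pair, but forcing $\psi:=\phi^{-1}$ would require $5=|\eta_2|\le|\theta_2|+1=3$, and the only inversive re-pairing matches $\theta_1$ with $\eta_2$ and $\theta_2$ with $\eta_1$. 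This is exactly why the paper's $(2)\Rightarrow(3)$ is an induction that repeatedly pairs off maximal-length chains and swaps images while checking the length inequalities survive; some such argument is required. Second, in $(3)\Rightarrow(1)$ the rule $x_iu:=y_i$ for $i\le\min(k,m)$ fails when $m=k+1$, which an inversive satisfying pair permits: for $a\colon 1\mapsto 2$ and $b\colon 1\mapsto 2\mapsto 3$ in $I_3$ the length-$2$ $a$-chain is paired both ways with the length-$3$ $b$-chain, and your alignment together with the symmetric definition of $v$ yields $x_1uv=x_1$ rather than $x_1a=x_2$. The correct assignment in this case is $x_iu:=y_{i+1}$; that is, each matched pair must be laid out starting from its longer chain. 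The paper enforces this by processing chains in decreasing order of length and using the inversive property to show the image of the currently maximal chain is never the longer one; your sketch needs an equivalent mechanism before the chain-by-chain verification becomes routine.
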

\begin{proof}
$(1) \Rightarrow (2)$: Let $a = uv$ and $b = vu$ for some $u,v \in I_n$. Pick any $1 \leq k \leq n$ and any $(x_1 \, \dots \, x_k) \in \Delta_a^k$. Then $x_k uvu = x_1 u$ and, for each $i < k$, $x_i uvu = x_{i+1}u$. That is, $(x_1 u \, \dots \, x_k u) \in \Delta_b^k$. Because $u$ is a partial bijection, we know that for any other $(y_1 \, \dots \, y_k) \in \Delta_a^k$, $(x_1 u \, \dots \, x_n u)$ and $(y_1  u \, \dots \, y_k u)$ are distinct cycles of $b$. Thus, $u$ can be considered an injective map from $\Delta_a^k$ to $\Delta_b^k$. Symmetrically, $v$ is an injective map from $\Delta_b^k$ to $\Delta_a^k$, so $|\Delta_a^k|=|\Delta_b^k|$.

Pick any $1 < k \leq n$ and any $[x_1 \, \dots \, x_k] \in \Theta_a^k$. Then for each $1 \leq i < k-1$, $x_i uvu = x_{i+1} u$. Because $x_k \not \in \dom(a)$, then $x_k \not \in \dom(u)$ or $x_ku \not \in \dom(vu)$. Thus, $u$ induces a map $\phi: \Theta_a^{>1} \rightarrow \Theta_b$ such that for each $\theta \in \dom(\phi)$, $|\theta| \leq |\phi(\theta)|+1$. Symmetrically, $v$ induces a map $\psi: \Theta_b^{>1} \rightarrow \Theta_a$ such that for each $\theta \in \dom(\psi)$, $|\theta| \leq |\psi(\theta)|+1$. That these maps are injective follows from $u$ and $v$ being partial bijections.

$(2) \Rightarrow (3)$: Let $\phi$ and $\psi$ be a satisfying pair. We prove by induction on the size of $|\Theta_a^{>1}|+|\Theta_b^{>1}|$ that the existence of a satisfying pair implies existence of an inversive satisfying pair. If either $\Theta_a^{>1}$ or $\Theta_b^{>1}$ is empty, then any satisfying pair will already be an inversive satisfying pair since the compositions will have empty domain. Suppose an inversive pair $(\phi,\psi)$ exists for $a$ and $b$. Pick a maximal length chain $\theta_a \in \Theta_a^{>1}$ and a maximal length chain $\theta_b \in \Theta_b^{>1}$. We want $\phi(\theta_a) = \theta_b$ and $\psi(\theta_b) = \theta_a$. If this is not already true, we will redefine $\phi$ and $\psi$ in such a way that $(\phi,\psi)$ remains a satisfying pair.

Let $\eta_a := \phi(\theta_a)$ and $\eta_b := \psi(\theta_b)$. If there exists $\theta_a' \in \Theta_a^{>1}$ such that $\phi(\theta_a') = \theta_b$, define $\phi(\theta_a) := \theta_b$ and $\phi(\theta_a') := \eta_a$. Since $\theta_a$ is of maximal length, $|\theta_a'| \leq |\theta_a|$. Since $(\phi,\psi)$ is a satisfying pair, $|\theta_a| \leq |\eta_a|+1$. Since $\theta_b$ is of maximal length, $|\theta_a| \leq |\eta_a|+1 \leq |\theta_b|+1$. If there is no $\theta_a' \in \Theta_a^{>1}$ such that $\phi(\theta_a') = \theta_b$, simply define $\phi(\theta_a) := \theta_b$. Likewise, if there exists $\theta_b' \in \Theta_b^{>1}$ such that $\psi(\theta_b') = \theta_a$, then swap the images of $\psi(\theta_b)$ and $\psi(\theta_b')$. If no such $\theta_b'$ exists, simply define $\psi(\theta_b) := \theta_a$. We now have a satisfying pair $(\phi,\psi)$ such that $\phi(\theta_a) = \theta_b$ and $\psi(\theta_b) = \theta_a$.

Let $a'$ be equal to $a$ restricted to $[n] \setminus \theta_a$ and $b'$ be equal to $b$ restricted to $[n] \setminus \theta_b$. Restricting $\phi$ to $\Theta_a^{>1} \setminus \{\theta_a\}$ and $\psi$ to $\Theta_b^{>1} \setminus \{\theta_b\}$ yields a satisfying pair for $a'$ and $b'$. By the induction hypothesis, there exists an inversive satisfying pair $(\phi',\psi')$ for $a'$ and $b'$. Extend these maps by defining $\phi'(\theta_a) := \theta_b$ and $\psi(\theta_b) := \theta_a$. Now $(\phi',\psi')$ is an inversive satisfying pair for $a$ and $b$.

$(3) \Rightarrow (1)$: We will use the conditions on the cycles and the inversive satisfying pair $(\phi,\psi)$ to construct $u$ and $v$ such that $a = uv$ and $b = vu$. For any $1 \leq k \leq n$, the cycle condition ensures that we can define a bijection $f:\Delta_a^k \rightarrow \Delta_b^k$. For each $(x_1 \dots x_k) \in \Delta_a^k$, define $u$ such that $f((x_1 \dots x_k)) = (x_1 u \dots x_k u)$. Although there are $k$ ways to define $u$ in this way, any of them will suffice. For each of these $k$ options, we can define $v$ such that $x_k uv = x_1$ and $x_i uv = x_{i+1}$ for each $1 \leq i < k$. Then $(x_1 \dots x_k) \in \Delta_{uv}^k$ and $f((x_1 \dots x_k)) \in \Delta_{vu}^k$. That is, $\Delta_a = \Delta_{uv}$ and $\Delta_b = \Delta_{vu}$.

We will use the inversive satisfying pair $(\phi,\psi)$ to continue constructing $u$ and $v$ by the following algorithm.

Step 1. Define $C$ to be the chains in the disjoint union of $\Theta_a^{>1}$ and $\Theta_b^{>1}$.

Step 2. Pick a maximal length chain $\theta \in C$.

Step 3(u). Suppose $\theta = [x_1 \dots x_k] \in \Theta_a^{>1}$. If $|\phi(\theta)| = |\theta|$, define $u$ so that $\phi(\theta) = [x_1u \dots x_ku]$. If $|\phi(\theta)| = |\theta| - 1$, let $x_ku$ be undefined. Define $v$ to satisfy $x_iuv = x_{i+1}$ for each $1 \leq i < k$.

Step 3(v). If $\theta \in \Theta_b^{>1}$, execute Step 3(u) with $u$ and $v$ transposed and replacing $\phi$ with $\psi$.

Step 4. Remove $\theta$ from $C$. If Step 3(u) (3(v), resp.) was just executed, let $C:= C \setminus \{\phi(\theta)\}$ ($\{\psi(\theta)\}$, resp.). If $C$ is not empty, return to Step 2.

We first claim that the two cases considered in Step 3(u) are the only possible cases. Certainly, the claim is true if $|\phi(\theta)| = 1$, so we need only consider when $\phi(\theta) \in C$ at the start of the algorithm. Because $(\phi,\psi)$ is a satisfying pair, we know $|\theta| \leq |\phi(\theta)|+1$. If we prove that $\theta$ is picked in Step 3(u) only if $\phi(\theta)$ is removed from $C$ in Step 4 of that same iteration of the algorithm, then the maximality of $\theta$ finishes the proof of our claim.

Suppose $\phi(\theta)$ is removed from $C$ in Step 4 of the $m^{th}$ iteration of the algorithm. If Step 3(u) was invoked in the $m^{th}$ iteration, then the injectivity of $\phi$ ensures that $\theta$ was picked during this iteration. If Step 3(v) was invoked in the $m^{th}$ iteration, then $(\phi,\psi)$ being an inversive satisfying pair ensures that $\psi(\phi(\theta)) = \theta$ will also be removed so that $\theta$ will not be picked in a subsequent iteration of Step 3(u).

We now verify that $\Theta_{uv}^k = \Theta_a^k$ and $\Theta_{vu}^k = \Theta_b^k$ for each $1 < k \leq n$, which completes the proof. The only way $uv$ is defined on a chain $\theta \in \Theta_{uv}^k$ is if Step 2 picked $\theta$ from $\Theta_a^k$ or $\psi^{-1}(\theta)$ from $\Theta_b^{>1}$. In either case, $\theta \in \Theta_a^k$. For converse containment, pick any $\theta = [x_1,\dots,x_k] \in \Theta_a^k$ and note that, in order to be removed from $C$ in Step 4, either $\theta$ is picked in Step 2 or $\theta = \psi(\theta')$ for some $\theta' \in \Theta_b^{>1}$ picked in Step 2. If $\theta$ is picked, Step 3(u) defines $x_iuv = x_{i+1}$ for each $1 \leq i < k$ and leaves $x_k uv$ undefined. Thus, $\theta \in \Theta_{uv}^k$.

If $\theta$ is not picked, there exists a $\theta' = [y_1,\dots,y_\ell] \in \Theta_b^{>1}$ such that $\theta = \psi(\theta')$ and $\ell = k$ or $\ell = k+1$. Step 3(v) defines $y_iv = x_i$ and $y_ivu = y_{i+1}$ for $i \in [\ell-1]$. Consequently, $x_i uv = y_i vuv = y_{i+1} v = x_{i+1}$ for each $i \in [\ell-1]$. If $\ell = k$, $y_kv=x_k$, but $x_ku$ is left undefined. If $\ell = k+1$, then Step 3(v) omits $y_{k+1}$ from the domain of $v$. Thus, $y_kvu \not \in \dom(v)$ and $x_k \not \in \dom(uv)$. Hence, $\theta \in \Theta_{uv}^k$ and $\Theta_{uv}^k = \Theta_a^k$. By analogous argument, $\Theta_{vu}^k = \Theta_b^k$.
\end{proof}

\begin{theorem}
Given $a,b \in I_n$, there exist polynomial time algorithms to check each of the following in $I_n$: $a \sim_i b$, $a \sim_p b$, $a \sim_{p^*} b$, and $a \sim_{tr} b$.
\end{theorem}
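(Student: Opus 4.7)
The plan is to reduce each of the four conjugacy tests to a polynomial-time decidable combinatorial condition on the cycle-chain decompositions of $a$ and $b$. As a preliminary step, I would compute the cycle-chain decomposition of each of $a$ and $b$ by iterating over $[n]$ and tracing forward orbits: from any unvisited point, follow the action of the partial bijection until either returning to the starting point (yielding a cycle) or leaving the domain (yielding a chain). This runs in $O(n)$ time and yields the multisets of cycle and chain lengths.

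For $\sim_i$, the characterization \cite[Thm 2.10]{AK:CI} reduces the test to equality of cycle-chain types, which is a comparison of two pairs of integer multisets. For $\sim_{p^*}$ and $\sim_{tr}$, the characterization \cite[Thm 10]{KM:OC} reduces the test to equality of cycle types alone. Both are easily handled in polynomial time by sorting and comparing lists.

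The main case is $\sim_p$, where I would invoke Lemma~\ref{lem:pConj}. After verifying that $|\Delta_a^k| = |\Delta_b^k|$ for every $k$, it remains to decide the existence of a satisfying pair $(\phi, \psi)$. The key observation is that the conditions imposed on $\phi$ and $\psi$ in the definition of satisfying pair are independent of one another --- only the inversive variant couples them --- so the decision problem splits into two separate existence questions: an injection $\phi : \Theta_a^{>1} \to \Theta_b$ with $|\theta| \leq |\phi(\theta)|+1$, and an injection $\psi : \Theta_b^{>1} \to \Theta_a$ with $|\theta'| \leq |\psi(\theta')|+1$. Each is a bipartite matching problem solvable in polynomial time by, for instance, Hopcroft--Karp applied to the graph whose edges are the compatible length pairs. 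Since edge eligibility depends only on chain lengths, Hall's condition in fact simplifies to verifying, for each $k \in [n]$, that $|\{\theta \in \Theta_a^{>1} : |\theta| \geq k\}| \leq |\{\theta' \in \Theta_b : |\theta'| \geq k-1\}|$, and symmetrically for $\psi$, which gives a direct $O(n^2)$ check.

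The main obstacle is justifying the decoupling step for $\sim_p$: a priori one might worry that coordinating $\phi$ and $\psi$ (so that their compositions are idempotent) is essential. This is where Lemma~\ref{lem:pConj} does the work --- its equivalence between the existence of a satisfying pair and an inversive satisfying pair means the joint idempotency constraints come for free once either one-sided matching exists, so $\phi$ and $\psi$ may be constructed independently.
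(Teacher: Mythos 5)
Your proposal is correct, and for the $\sim_p$ case it takes a genuinely different route from the paper. The paper's algorithm (Algorithm~\ref{alg:pConj}) is a greedy procedure on the disjoint union $\Theta_a^{>1} \sqcup \Theta_b^{>1}$ that repeatedly pairs a maximal-length chain with a maximal-length partner on the other side, and its correctness proof is an induction showing that any inversive satisfying pair can be rearranged to agree with the greedy choices. You instead exploit the fact that condition (2) of Lemma~\ref{lem:pConj} already suffices for the decision problem: the definition of a satisfying pair places no joint constraint on $\phi$ and $\psi$, so existence splits into two independent injection problems, and the equivalence $(2)\Leftrightarrow(1)$ of the lemma is precisely what licenses ignoring the inversive (idempotency) conditions. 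Your further reduction of each matching to a threshold inequality is also sound: since a chain of length $\ell$ may map to any chain of length at least $\ell-1$, the neighborhoods in the bipartite graph are nested by length, so Hall's condition need only be verified on the sets $\{\theta : |\theta| \geq k\}$, which is exactly the $O(n^2)$ check you state (one should remember that the codomain $\Theta_b$ includes the length-one chains, which your $k-1$ threshold handles). The only blemish is the phrase ``once either one-sided matching exists'': a satisfying pair requires \emph{both} injections, so the decoupling means the two existence questions are answered independently and both must succeed; this is clearly what you intend. Your approach buys a simpler algorithm and correctness argument (two Hall-type inequalities rather than an inductively justified greedy pairing), at the cost of leaning entirely on the $(1)\Leftrightarrow(2)$ direction of the lemma; the paper's greedy version has the mild advantage of directly producing the inversive satisfying pair from which the lemma's proof of $(3)\Rightarrow(1)$ constructs explicit witnesses $u,v$.
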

\begin{proof}
For $\sim_c$, $\sim_{p^*}$, and $\sim_{tr}$, we need only enumerate the cycles and chains. Certainly this can be done in linear time simply by tracking how $a$ and $b$ act on $[n]$. To check $\sim_p$, again we check that the cycles match and then we run Algorithm~\ref{alg:pConj} to check whether there exists an inversive satisfying pair.

\emph{Correctness:} If Algorithm~\ref{alg:pConj} accepts, the pairings it chooses for the chains in $\Theta_a$ and $\Theta_b$ will yield an inversive satisfying pair. For each iteration of lines 10-16 (17-22, resp.), define $\phi(\theta) = \theta'$ ($\psi(\theta) = \theta'$, resp.). If $|\theta'|>1$, define $\psi(\theta') = \theta$ ($\phi(\theta') = \theta$, resp.). Lines 10-12 and 17-19 ensure that $\phi$ and $\psi$ have correct domain and range. Lines 13-16 and 20-23 ensure that $(\phi,\psi)$ is an inversive satisfying pair.

We prove the other direction by induction on the size of $C = \Theta_a^{>1} \sqcup \Theta_b^{>1}$. Certainly, the algorithm accepts if $|C| = 0$. Assume that whenever $|C| \leq n$, the existence of an inversive satisfying pair implies the algorithm accepts. Suppose there exists an inversive satisfying pair $(\phi,\psi)$ for $a,b \in I_n$ with corresponding $|C| = n + 1$. WLOG, let $\theta \in \Theta_a$ and $\theta'$ be the first chains chosen by the algorithm in lines 9 and 13. Since $(\phi,\psi)$ is a satisfying pair, $\phi(\theta) \in (C \cap \Theta_b) \cup B$ and $\theta \leq |\phi(\theta)|+1$. Since the algorithm picks a maximal $\theta'$, then $|\theta| \leq |\phi(\theta)|+1 \leq |\theta'|+1$. The algorithm then removes $\theta$ and $\theta'$ from their respective sets so that $C$ now has chains from elements $a'$ and $b'$ obtained by restricting $a$ to $[n] \setminus \theta$ and $b$ to $[n] \setminus \theta'$. We now show that an inversive satisfying pair exists for $a'$ and $b'$ and thus, by inductive hypothesis, the algorithm accepts.

If $\phi(\theta) = \theta'$, then $(\phi,\psi)$ restricted to $C$ will be an inversive satisfying pair. If $\phi(\theta) \neq \theta'$, we consider two cases. If there is no $\eta \in \Theta_a^{>1}$ such that $\phi(\eta) = \theta'$, then because $(\phi,\psi)$ is an inversive satisfying pair, $|\theta'| = 1$ and $(\phi,\psi)$ restricted to $C$ will be an inversive satisfying pair. If there exists $\eta \in \Theta_a^{>1}$ such that $\phi(\eta) = \theta'$, then redefine  $\phi(\eta) := \phi(\theta)$ and $\psi(\phi(\theta)) := \eta$. Since $\theta$ is maximal and $(\phi,\psi)$ was a satisfying pair, $|\eta| \leq |\theta| \leq |\phi(\theta)|+1 = |\phi(\eta)|+1$. Likewise, since $\theta'$ is also maximal, $|\phi(\theta)| \leq |\theta'| \leq |\psi(\theta')|+1 = |\eta|+1$. So, $(\phi,\psi)$ restricted to $C$ is now an inversive satisfying pair.

\emph{Runtime:} As previously noted, Line 1 will execute in linear time. The size of $C$ is at most $n$. Each complete iteration of the while loop removes either one or two chains from $C$. Each iteration picks maximal elements in linear time. Thus, the algorithm accepts or rejects in polynomial time.
\end{proof}

  \begin{algorithm}
  \caption{$\Poly$ algorithm to check if $a \sim_p b$ in $I_n$}
    \label{alg:pConj}
    \begin{algorithmic}[1]
      \Input{$a,b \in I_n$}
      \Output{Is $a \sim_p b$ in $I_n$?}
      \State Enumerate the cycles of $a$ and $b$: $\Delta_a$ and $\Delta_b$
      \For{$k \in [n]$}
        \If{$|\Delta_a^k| \neq |\Delta_b^k|$}
          \Reject
        \EndIf
      \EndFor
      \State Enumerate the chains of $a$ and $b$: $\Theta_a$ and $\Theta_b$
      \State Let $C$ be the disjoint union of $\Theta_a^{>1}$ and $\Theta_b^{>1}$
      \While{$C$ is nonempty}
        \State Pick a maximal length $\theta \in C$
        \If{$\theta \in \Theta_a$}
          \If{$(C \cap  \Theta_b) \cup \Theta_b^1$ is empty}
            \Reject
          \EndIf
          \State Pick a maximal length $\theta '\in (C \cap  \Theta_b) \cup \Theta_b^1$
          \If{$|\theta|>|\theta'|+1$}
            \Reject
          \EndIf
          \State Remove $\theta$ from $C$ and $\theta'$ from $C \cup \Theta_b^1$
        \Else
          \If{$(C \cap  \Theta_a) \cup \Theta_a^1$ is empty}
            \Reject
          \EndIf
          \State Pick a maximal length $\theta '\in (C \cap  \Theta_a) \cup \Theta_a^1$
          \If{$|\theta|>|\theta'|+1$}
            \Reject
          \EndIf
          \State Remove $\theta$ from $C$ and $\theta'$ from $C \cup \Theta_a^1$
        \EndIf
      \EndWhile
      \State\Accept
    \end{algorithmic}
  \end{algorithm}

For an inverse monoid $S$, \cite[Cor 4.2]{AK:CI} proves that ${\sim_u} = {\sim_i}$ iff $S$ is factorizable. Thus, we are interested in the complexity of the following problem.

\medskip
{\bf Factorizable}
\begin{itemize}
\item Input: $a_1,\dots,a_k \in I_n$.
\item Problem: Is $\langle a_1,\dots,a_k,a_1^{-1},\dots,a_k^{-1},1\rangle$ factorizable?
\end{itemize}

\begin{theorem}
Factorizable is in $\Poly$.
\end{theorem}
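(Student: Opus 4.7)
The plan is to reduce factorizability of $S$ to a factorizability question about each of the $2k$ input generators, then exploit a clean description of $U(S)$ together with standard permutation group algorithms. For the reduction: if $a = u_1 e_1$ and $b = u_2 e_2$ with $u_j \in U(S)$ and $e_j \in E(S)$, then $ab = u_1 u_2 (u_2^{-1} e_1 u_2) e_2$. Conjugation by a unit carries idempotents to idempotents, and idempotents in an inverse semigroup commute, so $(u_2^{-1} e_1 u_2) e_2 \in E(S)$ and $ab$ is factorizable. Hence the factorizable elements form a submonoid of $S$, and $S$ is factorizable iff each of $a_1, \ldots, a_k, a_1^{-1}, \ldots, a_k^{-1}$ is factorizable.

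The key structural observation is that $U(S)$ equals the subgroup of $\mathrm{Sym}([n])$ generated by those input generators that are themselves total bijections of $[n]$. For the nontrivial inclusion, note that any product $s = g_1 \cdots g_m$ of partial bijections satisfies $\dom(g_1 \cdots g_j) \subseteq \dom(g_1 \cdots g_{j-1})$; hence if $s \in U(S)$ has $\dom(s) = [n]$, then $\dom(g_1) = [n]$, which forces $[n]g_1 = [n]$ as well (a partial bijection with full domain has full range), and iterating shows each $g_j$ is a total bijection. The relevant generators can be identified by a single pass over the input, after which a standard Schreier-Sims computation produces a strong generating set and a base for $U(S)$ in polynomial time.

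For each input generator $a$: if $a$ is already a total bijection, then $a \in U(S)$ and $a = a \cdot 1$ is trivially factorizable. Otherwise, $a = ue$ with $u \in U(S), e \in E(S)$ forces $e = a^{-1}a$ (the partial identity on $\dom(a)$) and forces $u$ to extend $a$ as a partial function; so it suffices to test whether some $u \in U(S)$ satisfies $u|_{\dom(a)} = a$. The permutations of $[n]$ restricting to $a$ on $\dom(a)$ form (if nonempty) a coset of the pointwise stabilizer of $\dom(a)$ in $\mathrm{Sym}([n])$, and whether this coset meets $U(S)$ can be tested by sifting along a base that begins with the elements of $\dom(a)$, a standard polynomial-time routine given the strong generating set.

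The main obstacle is establishing the structural lemma that $U(S)$ is generated (as a permutation group) by the already-total input generators; once that is in hand, the reduction to generators and the invocation of Schreier-Sims make the polynomial-time algorithm essentially routine.
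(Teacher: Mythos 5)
Your proposal is correct and follows essentially the same route as the paper: reduce factorizability of $S$ to factorizability of each generator via closure under products, observe that every unit of $\langle a_1,\dots,a_k,a_1^{-1},\dots,a_k^{-1},1\rangle$ is a product of the total-bijection generators (the paper derives this from $S\setminus U(S)$ being an ideal, you from domain containment), and test whether some unit extends $a$ on $\dom(a)$ by a Schreier--Sims sift along a base containing $\dom(a)$. The only nit is that $a^{-1}a$ is the partial identity on the range $[n]a$ rather than on $\dom(a)$ (that would be $aa^{-1}$), a slip that does not affect the argument.
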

\begin{proof}
Let $S$ be an inverse monoid and suppose $s,t \in S$ factor as $s = ue$ and $t = vf$ for units $u$ and $v$ and idempotents $e$ and $f$. Then $st = uevf = uevv^{-1}vf$. Since idempotents commute, $st = uvv^{-1}evf$. Since $v^{-1}ev$ is idempotent and the composition of idempotents is an idempotent, $st$ is also factorizable. Thus, an inverse semigroup given by generators is factorizable iff each generator is factorizable. Consequently, it is sufficient to describe a polynomial-time algorithm to check if a generator is factorizable.

Suppose that $a = ue$ for a unit $u$ and an idempotent $e$. Then $xa = xue$ for each $x \in \dom(a)$ and $xu \in \dom(e)$. Since idempotents fix their domain, then $xa = xu$ for each $x \in \dom(a)$. Conversely, if $xu = xa$ for every $x \in \dom(a)$, then $xua^{-1}a = xaa^{-1}a = xa$. Thus, we can further restrict our attention on whether we can produce a unit $u$ that agrees with $a$ on its domain.

Because $U(S)$ is the topmost $\gJ$-class, $S \setminus U(S)$ is an ideal and, consequently, each unit of $S$ is generated by $U(S) \cap \{a_1,\dots,a_k\}$. We can adapt Sims' stabilizer chain algorithm for testing membership in groups. Because we only care how $u$ acts on $\dom(a)$, we use $\dom(a)$ as our base set and employ Sim's algorithm to produce a strong generating set (SGS) for the following stabilizer chain. Let $A_0 = \emptyset$, $A_{i+1} = A_i \cup \{x\}$ for some $x \in \dom(a) \setminus A_i$, $S_i = \Stab(A_i) \leq U(S)$, and $S_\ell = \Stab(\dom(a))$ for $\ell = |\dom(a)|$. A unit $u$ that agrees with $a$ on its domain will exist iff sifting $a$ yields coset representatives $r_i \in S_i$ for each $S_{i+1}$ such that $xa = xr_1 \cdots r_\ell$ for each $x \in \dom(a)$. Since each representative can be generated from the SGS in polynomial time and since the SGS can be enumerated in polynomial time, we can check if $a$ is factorizable in polynomial time.
\end{proof}

We now consider the following problems.

\medskip
{\bf Membership in Inverse Semigroups}
\begin{itemize}
\item Input: $a_1,\dots,a_k,b \in I_n$.
\item Problem: Is $b \in \langle a_1,\dots,a_k,a_1^{-1},\dots,a_k^{-1}\rangle$?
\end{itemize}

\medskip
{\bf Idempotent Membership in Inverse Semigroups}
\begin{itemize}
\item Input: $a_1,\dots,a_k,b \in I_n$ with $b^2 = b$.
\item Problem: Is $b \in \langle a_1,\dots,a_k,a_1^{-1},\dots,a_k^{-1}\rangle$?
\end{itemize}

\medskip
{\bf Subsemigroup $i$-Conjugacy}
\begin{itemize}
\item Input: $a_1,\dots,a_k \in I_n$.
\item Problem: Is $a_1 \sim_i a_2$ in $\langle a_1,\dots,a_k,a_1^{-1},\dots,a_k^{-1}\rangle$?
\end{itemize}

It is known that Membership in Inverse Semigroups can by reduced in polynomial time to Idempotent Membership in Inverse Semigroups \cite[Thm 4.3]{TJ:PB}. We now demonstrate a connection between all three of these problems.

Let $S = \langle a_1,\dots,a_k\rangle \leq I_n$ be an inverse semigroup and let $I_A$ be the semigroup of partial bijective maps on the set $A := [n] \times [n+1]$. For each $s \in S$, define $\bar{s}$ to be the element in $I_A$ satisfying the following: $(x,y)\bar{s} = (xs,y)$. Pick an $h \in I_n$ and define $a,b \in I_A$ as follows:
\begin{align*}
(x,y)a &= (x,y+1),\text{ if }y \leq x\text{ and }x\in dom(h^{-1})\\
(x,y)b &= (x,y+1),\text{ if }y \leq xh\text{ and }x\in dom(h)
\end{align*}

Define $\bar{S} = \langle \bar{a}_1, \dots, \bar{a}_k,a,b,a^{-1},b^{-1}\rangle$. 

\begin{proposition} \label{thm:iRed}
$h \in S$ iff both $hh^{-1} \in S$ and $a \sim_i b$ in $\bar{S}$.
\end{proposition}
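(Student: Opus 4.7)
For the forward direction, $h \in S$ gives $hh^{-1} \in S$ by closure under the unary inverse, and $s = \bar{h} \in \bar{S}$ witnesses $a \sim_i b$. A direct calculation confirms $\bar{h}\,a\,\bar{h}^{-1} = b$: both sides are defined at $(x, y)$ iff $x \in \dom(h)$ and $y \leq xh$, and both return $(x, y+1)$. The symmetric equation $\bar{h}^{-1} b \bar{h} = a$ follows analogously.

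For the backward direction, suppose $hh^{-1} \in S$ and some $s \in \bar{S}$ satisfies $sas^{-1} = b$. My plan is to read off the action of $s$ from the chain decomposition of $a$ and $b$, then extract an element of $S^1$ that, together with $hh^{-1}$, reconstructs $h$. As partial bijections on $A = [n] \times [n+1]$, $a$ consists of chains $[(x, 1), \ldots, (x, x+1)]$ of length $x+1$ (indexed by $x \in \image(h)$), and $b$ of chains $[(x, 1), \ldots, (x, xh+1)]$ of length $xh+1$ (indexed by $x \in \dom(h)$). The equation $sas^{-1} = b$ forces $s^{-1}$ to send each chain of $a$ onto a chain of $b$ of equal length, and since the chains within $a$ (and within $b$) all have distinct lengths, the length-$(xh+1)$ matching uniquely pairs the chain at $xh \in \image(h)$ in $a$ with the chain at $x \in \dom(h)$ in $b$. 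Entrywise, this yields $(x, i)\,s = (xh, i)$ for every $x \in \dom(h)$ and $1 \leq i \leq xh+1$.

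Next I extract the first-coordinate action of $s$. Write $s$ as a product of generators of $\bar{S}$ and group consecutive $\bar{a}_j$ factors into blocks, so that (as an equality in $I_A$) $s = \bar{t}_0\,g_1\,\bar{t}_1 \cdots g_\ell\,\bar{t}_\ell$ with $t_j \in S \cup \{1\}$ and $g_i \in \{a^{\pm 1}, b^{\pm 1}\}$. Each $g_i$ fixes the first coordinate, and each $\bar{t}_j$ applies $t_j$ to it. Tracking $(x, 1)$ through this product (well-defined since $(x, 1) \in \dom(s)$ for $x \in \dom(h)$), the first coordinate evolves as $x \mapsto x\sigma$, where $\sigma := t_0 t_1 \cdots t_\ell \in S^1$. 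Combined with the chain analysis, $\sigma|_{\dom(h)} = h$.

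To conclude: if $\sigma = 1$, then $h$ acts as the identity on $\dom(h)$, so $h = hh^{-1} \in S$. Otherwise $\sigma \in S$, and the product $(hh^{-1})\sigma \in S$ equals $h$ as a partial bijection, since $hh^{-1}$ restricts the input to $\dom(h)$, on which $\sigma$ agrees with $h$. The main obstacle I anticipate is the normalization step and verifying that the first-coordinate trajectory is consistent despite the domain-dependent restrictions on the Type-2 generators $a^{\pm 1}, b^{\pm 1}$; the chain-length argument ensures the matching is uniquely forced, and the hypothesis $hh^{-1} \in S$ is precisely what lets us pin $h$ down exactly from $\sigma$ rather than only on $\dom(h)$.
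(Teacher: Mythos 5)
Your forward direction matches the paper's. Your backward direction takes a genuinely different route: where the paper first reduces (via the claimed centrality of $a$ and $b$) to a conjugator of the form $\bar{g}$ with $g \in S$, and then pins down $xg = xh$ by combining the inequality $xh \leq xg$ (read off from $(x,xh)\bar{g}a\bar{g}^{-1} = (x,xh+1)$) with a downward induction over the ordering of $\dom(h^{-1})$, you instead analyze the full chain decomposition of $a$ and $b$ and then extract the first-coordinate action of an arbitrary $s \in \bar{S}$ by grouping a word for $s$ into blocks $\bar{t}_0 g_1 \bar{t}_1 \cdots g_\ell \bar{t}_\ell$. That extraction step is sound (the generators $a^{\pm 1}, b^{\pm 1}$ fix the first coordinate, so the first coordinate of $(x,1)s$ is $x t_0 \cdots t_\ell$ with $\sigma = t_0 \cdots t_\ell \in S^1$), and it is a legitimate substitute for the paper's centrality reduction; your handling of the $\sigma = 1$ case and of $h = (hh^{-1})\sigma$ is also correct.

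The gap is in the chain-matching step. The assertion that ``$sas^{-1}=b$ forces $s^{-1}$ to send each chain of $a$ onto a chain of $b$ of equal length'' is not a consequence of the conjugation equation alone: in general $sas^{-1}=b$ only forces each chain of $b$ to be carried by $s$ onto a consecutive sub-path of some possibly longer chain of $a$. (For instance, if $a$ is a single chain $[p_1,\dots,p_5]$ and $s$ is the identity restricted to $\{p_1,p_2,p_3\}$, then $sas^{-1}$ is the chain $[p_1,p_2,p_3]$, so a length-$3$ chain of $b$ sits inside a length-$5$ chain of $a$.) To upgrade ``embeds into'' to ``maps onto a chain of equal length'' you must use the specific structure of this construction: the chains of $a$ have lengths $\{z+1 : z \in \image(h)\}$ and those of $b$ have lengths $\{xh+1 : x \in \dom(h)\}$, which are equal sets of pairwise distinct values, so one needs a counting or maximal-length induction (the images of distinct chains of $b$ are disjoint, the total lengths agree, hence the unique longest chain of $b$ must exactly fill the unique longest chain of $a$; remove both and recurse) to force the entrywise identification $(x,i)s = (xh,i)$. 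You gesture at this (``the chain-length argument ensures the matching is uniquely forced'') but never supply it, and it is the load-bearing step of the backward direction --- essentially the same downward induction the paper carries out concretely on $\dom(h^{-1})$. With that argument filled in, your proof goes through.
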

\begin{proof}
$\Rightarrow$: First note that $a$ and $b$ are central, so $a \sim_i b$ in $\bar{S}$ iff there exists $\bar{g} \in \langle \bar{a}_1,\dots,\bar{a}_k\rangle$ such that $\bar{g}b\bar{g}^{-1}=a$ and $\bar{g}^{-1}a\bar{g} = b$. 

Certainly if $h \in S$, then $hh^{-1} \in S$. We claim that $\bar{h}a\bar{h}^{-1} = b$ and $\bar{h}^{-1}b\bar{h} = a$. Pick any $(x,y)$ with $y \leq xh$ and $x \in dom(h)$. Then,
\begin{align*}
(x,y)\bar{h}a\bar{h}^{-1} &= (xh,y)a\bar{h}^{-1}\\
&=(xh,y+1)\bar{h}^{-1}\\
&=(x,y+1)\\
&=(x,y)b
\end{align*}
Pick any $(x,y)$ with $y \leq x$ and $x \in dom(h^{-1})$. Then,
\begin{align*}
(x,y)\bar{h}^{-1}b\bar{h} &= (xh^{-1},y)b\bar{h}\\
&=(xh^{-1},y+1)\bar{h}\\
&=(x,y+1)\\
&=(x,y)a
\end{align*}
$\Leftarrow$: Let $\bar{g} \in \bar{S}$ satisfy $\bar{g}a\bar{g}^{-1} = b$ and $\bar{g}^{-1}b\bar{g}=a$. Pick any $x \in dom(h)$ and note that $(x,xh+1)=(x,xh)b=(x,xh)\bar{g}a\bar{g}^{-1}=(xg,xh)a\bar{g}^{-1}$. Thus, $xh \leq xg$ and $xg \in dom(h^{-1})$ for every $x \in dom(h)$.

We now prove by induction that for every $x \in \dom(h)$, $xg \leq xh$ and thus $xg = xh$. Order $\dom(h^{-1}) = \{x_1h,\dots,x_\ell h\}$ so that $x_ih < x_jh$ for every $i<j$. Since $x_\ell g \in \dom(h^{-1})$, then $x_\ell g \leq x_\ell h$ and thus $x_\ell g = x_\ell h$. Assume $x_i g = x_i h$ for every $i>j$. Then $x_j g \in \{x_1h,\dots,x_jh\}$. Then $x_jg \leq x_jh$ and we have that $xg = xh$ for every $x \in dom(h)$. Consequently, $h = hh^{-1}g \in S$.
\end{proof}

In summary, we know these problems are in $\PSPACE$, but whether this bound is tight is an open question. We know that idempotent membership is a $\PSPACE$-complete problem for partial bijection semigroups \cite[Thm 4.1]{TJ:PB}. The following example indicates that checking idempotent membership in inverse semigroups and $i$-conjugacy may also be hard.

\begin{example}
Towers of Hanoi as an inverse semigroup.
\end{example}
The Towers of Hanoi puzzle starts with a tower of $n$ distinctly sized disks on one of three pegs and the challenge is to move the tower to another peg subject to two rules: only one disk may be moved at a time and each disk can only rest on top of a larger disk. The following encodes this puzzle as an inverse semigroup in such a way that the idempotent whose membership is being tested corresponds to moving the disks from one peg to another. Consequently, the minimum number of compositions to generate the idempotent will be exponential in the number of generators and points.

The set of points $\{1\} \cup \{(x,y): x \in [3], y \in [n]\}$ represents $3n$ disk-positions for $n$ disks and an additional point to ensure the disks are moved. For each $i \in [n]$, the generator $a_i$, ($b_i$, $c_i$) corresponds to moving the $i^{th}$ disk from the first (first, second) peg to the second (third, third) peg. These generators fix the point $1$. The idempotents $d$ ($e$) fix only the points $\{(1,y):y\in[n]\}$ ($\{(2,y):y\in[n]\})$.

\[ (x,y) a_i = \begin{cases} 
      (2,i) & \text{if } x = 1 \text{ and } y = i \\
      (x,y) & \text{if } x = 3 \text{ or } y > i
   \end{cases}
\]
\[ (x,y) b_i = \begin{cases} 
      (3,i) & \text{if } x = 1 \text{ and } y = i \\
      (x,y) & \text{if } x = 2 \text{ or } y > i
   \end{cases}
\]
\[ (x,y) c_i = \begin{cases} 
      (3,i) & \text{if } x = 2 \text{ and } y = i \\
      (x,y) & \text{if } x = 1 \text{ or } y > i
   \end{cases}
\]

Let $S$ be the inverse semigroup generated by the aforementioned generators and $d$. Consider whether $e \in S$. Only $d$ excludes the point $1$, so $e = s_1ds_2$ for some $s_1,s_2 \in S$. Note that $d$ and $e$ have equally sized domains, so $s_1$ maps $\dom(e)$ to $\dom(d)$. The puzzle arises from the domain limitations for the $a,b,c$ elements. Note that each excludes disk-positions for disks of equal or lower size on the source peg and the target peg. To preserve $(2,1) \in \dom(e)$, the first generator for $s_1$ must be either $a_1^{-1}$ (to move top disk from peg 2 to peg 1) or $c_1$ (to move top disk from peg 2 to peg 3) . Likewise, each successive generator corresponds to choices for moving disks. Thus, $e$ can only be generated by properly moving the disks, requiring $2^n-1$ compositions. If we include $e$ in the semigroup, our construction shows that $d \sim_i e$ only by way of an exponentially long composition of $a,b,c$ elements.

\section{Partition Covering} \label{sec:open}
A conjugacy relation $\sim$ is \emph{partition covering} for a set $X$ if, for any partition of $X$, there exists a semigroup with universe $X$ such that the partition equals the semigroup's $\sim$-conjugacy classes. Problem 6.10 from \cite{AK:FN} asks whether $\sim_o$, $\sim_p$, and $\sim_{tr}$ are partition covering. It is known, through brute force calculation using the GAP package \emph{Smallsemi}, that $\sim_o$ and $\sim_p$ are partition covering for any $|X| \leq 6$. We now give an affirmative answer to Problem 6.10 for: (1) $\sim_o$ and $\sim_c$ with respect to countable sets and (2) $\sim_p$, $\sim_p^*$, and $\sim_{tr}$ for all finite sets.

\begin{theorem} \label{th:prob10}
The conjugacies $\sim_o$ and $\sim_c$ are partition covering for any countable set. The conjugacies $\sim_p$, $\sim_{p^*}$, and $\sim_{tr}$ are partition covering for any finite set.
\end{theorem}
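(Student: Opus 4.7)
The plan is to exhibit, for each partition $\mathcal{P}$ of $X$, an explicit semigroup structure on $X$ whose conjugacy classes are exactly $\mathcal{P}$. I would handle the two halves of the theorem separately, since $\sim_o$ is collapsed to the universal relation by any absorbing zero, whereas $\sim_p$, $\sim_{p^*}$, and $\sim_{tr}$ behave well in semigroups that are close to being null.

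For $\sim_o$ and $\sim_c$ on countable $X$ with partition $\{P_i\}_{i \in I}$, my plan is to use an inflation of an abelian group. When $|I|=1$ I would put $xy := y$ (a right-zero multiplication) to obtain the universal relation while avoiding an absorbing zero. When $|I| \ge 2$ I would impose a countable abelian group structure on $I$ (using $\mathbb{Z}/n\mathbb{Z}$ if $|I|=n$ and $\mathbb{Z}$ otherwise), pick representatives $r_i \in P_i$, and define $xy := r_{f(x)+f(y)}$, where $f \colon X \to I$ sends each element to its block index. Associativity is immediate from $f(r_s)=s$; no element can be an absorbing zero (any such would force $f$ to be constant, contradicting $|I| \ge 2$); and the equation $ag=gb$ reads $r_{f(a)+f(g)} = r_{f(g)+f(b)}$, which by injectivity of $r$ and commutativity of $+$ reduces to $f(a)=f(b)$. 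A symmetric argument handles $bh=ha$, so $\sim_c = \sim_o$ has classes exactly $\{P_i\}$.

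For $\sim_p$, $\sim_{p^*}$, $\sim_{tr}$ on finite $X$ with partition $\{P_1, \dots, P_k\}$, I would fix a linear order on the block-indices and define $x \cdot y := x$ if $\mathrm{block}(x) \le \mathrm{block}(y)$, else $x \cdot y := y$. Associativity is a short case analysis on three indices, the apparently troublesome case $i>j>k$ with $i \le k$ being vacuous. Within a single block, $(u,v):=(a,b)$ gives $uv=ab=a$ and $vu=ba=b$, witnessing $a \sim_p b$; across blocks, splitting on whether $uv$ equals $u$ or $v$ and using the absorption rule forces $b$ into $a$'s block, a contradiction. Hence $\sim_p$ is already an equivalence relation equal to $\{P_i\}$, and $\sim_{p^*}=\sim_p$. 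Since every element here is idempotent ($a^2=a$), the pair $(g,h):=(b,a)$ is mutually inverse and witnesses $\sim_{tr}$ within blocks, while the same cross-block analysis rules out any $\sim_{tr}$-witnesses between distinct blocks.

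The main obstacle I expect is the negative direction in both parts, namely showing that no cross-block pair admits a conjugacy witness. In Part~1 this is handled cleanly by the injectivity of $r$ combined with cancellation in $(I,+)$. In Part~2 one must enumerate both orderings of $\mathrm{block}(u)$ and $\mathrm{block}(v)$ and, in each case, use the absorption rule to force $b$ into $a$'s block. A smaller subtlety is the $|I|=1$ case for $\sim_c$: the right-zero choice avoids the absorbing zero that a constant semigroup would introduce, which would otherwise collapse $\sim_c$ to the identity rather than the desired universal relation.
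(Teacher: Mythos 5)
Your proposal is correct, and both halves land on the same basic ideas as the paper, with some differences worth recording. For the $\sim_o$/$\sim_c$ half your construction is essentially the paper's: the paper also inflates an abelian group ($\mathbb{Z}/\kappa\mathbb{Z}$ or $\mathbb{Z}$) over the block indices, defining $(a,b)(c,d):=(1,b+d)$, which is exactly your $xy:=r_{f(x)+f(y)}$ with $r_j:=(1,j)$. Your separate right-zero treatment of the one-block partition is not idle bookkeeping, though: when $\kappa=1$ and $|X|\ge 2$ the paper's formula degenerates to a null semigroup, which \emph{does} have a zero element, and there $\mathbb{P}(a)=\emptyset$ for nonzero $a$, so $\sim_c$ collapses to the identity rather than the required universal relation; your right-zero semigroup (no two-sided zero, $\sim_o=\sim_c$ universal) repairs precisely this case, which the paper's blanket claim that the construction ``does not have a zero element'' overlooks. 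For the finite half the two constructions agree within blocks --- left-zero multiplication, so every element is idempotent and same-block pairs are $\sim_p$- and $\sim_{tr}$-related via $u=a$, $v=b$ --- but differ across blocks: the paper sends $(a,b)(c,d)$ with $b\neq d$ to $(a,0)$ in a designated block $X_0$ chosen of maximal size (maximality being needed just so the operation is well defined), i.e.\ a left-zero band over the flat semilattice, whereas you take an ordinal sum of left-zero semigroups along a chain of blocks. Both are bands in which $uv$ and $vu$ always lie in a common block, which is the only fact the negative direction needs (for $\sim_p$ directly, and for $\sim_{tr}$ via $hg=a^\omega=a$, $gh=b^\omega=b$), so your version is equally valid and trades the maximal-block normalization for a slightly longer associativity check.
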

\begin{proof}
For any partition of $X$, let $\kappa$ be the number of classes in the partition and denote the classes as $\{X_j\}_{0\leq j \leq \kappa - 1}$. Denote the elements of $X_j$ as $\{(i,j)\}_{i \in |X_j|}$. To define a semigroup whose $o$-conjugacy classes are $\{X_j\}_{0 \leq j \leq \kappa - 1}$, we define the following binary operation on $X$ for finite $\kappa$: $(a,b)(c,d) := (1, b+d \mod\kappa)$. For countably infinite $\kappa$, define $(a,b)(c,d) := (1,b+d)$

This operation is closed since $(1,j) \in X_j$ for every $0 \leq j \leq \kappa - 1$. It is associative since the first component behaves like a null semigroup while the second component inherits associativity from addition. We prove that the partition equals the $\sim_o$-conjugacy classes by proving $(a,b) \sim_o (c,d)$ iff $b=d$. Suppose there exists $(e,f) \in X$ such that $(a,b)(e,f) = (e,f)(c,d)$. Since $f < \kappa$, $b+f \equiv f+d mod(\kappa)$ and $b+f = f+d$ each imply $b=d$. Conversely, for any $(a,b),(c,b) \in X_b$, $(a,b)(1,0) = (1,b) = (1,0)(c,b)$. Likewise, $(1,0)(a,b) = (1,b) = (c,b)(1,0)$. Since the semigroup does not have a zero element, this construction also proves that $\sim_c$ is partition covering for countable $X$.

We now consider any finite $X$. To define a semigroup whose $p$-conjugacy classes equal the partition, WLOG let $X_0$ be of maximal size and define the following binary operation on $X$:
\[(a,b)(c,d) :=
\begin{cases}
(a,b) & b=d \\
(a,0) & b \neq d
\end{cases}\]

Because $X_0$ has maximal size, $(a,0) \in X_0$ for any $(a,b) \in X$. It is associative since the first component behaves like a left zero semigroup and the second component is a semilattice with every non-zero element immediately above zero. That the second component is commutative also ensures $(a,b) \sim_p (c,d) \Rightarrow b = d$. Conversely, for any $(a,b),(c,b) \in X_b$, $(a,b) = (a,b)(c,b)$ and $(c,b) = (c,b)(a,b)$. Thus, the $\sim_p$-conjugacy classes equal the partition.  Since these classes are disjoint, $\sim_p$ is transitive, so $\sim_{p^*}$ is also partition covering.

Finally, we prove that $\sim_p \, = \, \sim_{tr}$ for this semigroup. Note that every element is idempotent, so $x^{\omega+1} = x$ for each $x \in X$. Pick any $x_a,x_b \in X$ such that there exist mutually inverse $x_g,x_h \in X$ satisfying $x_gx_h = x_a$, $x_hx_g = x_b$, and $x_gx_ax_h = x_b$. Since $x_gx_hx_g = x_g$ and $x_hx_gx_h = x_h$, their second components must match. Consequently, the second components of $x_a$ and $x_b$ must match. Conversely, every pair $(a,b),(c,b) \in X_b$ are mutually inverse, $(a,b)(c,b) = (a,b)$, $(c,b)(a,b) = (c,b)$, and $(c,b)(a,b)(a,b) = (c,b)$. Thus, $(a,b) \sim_{tr} (c,b)$.
\end{proof}

\section{Open Problems}
{\bf Problem 6.1}: Are the following problems $\PSPACE$-complete: (1) Membership in Inverse Semigroups, (2) Idempotent Membership in Inverse Semigroups, and (3) Subsemigroup $i$-Conjugacy?

{\bf Problem 6.2} Are $\sim_p$, $\sim_p^{*}$, and $\sim_{tr}$ partition covering for any countable set? For which sets is $\sim_c \cap \sim_p$ partition covering?

\section{Acknowledgements}
The author would like to thank Alan Cain, Ant\'onio Malheiro, and Peter Mayr for their valuable comments and contributions.


\iftrue

\fi
\end{document}